\newtheorem{thm}{Theorem}[section]
\newtheorem{prop}[thm]{Proposition}
\newtheorem{cor}[thm]{Corollary}
\newtheorem{qst}[thm]{Question}
\theoremstyle{definition}
\newtheorem{defn}[thm]{Definition}
\newtheorem*{defn*}{Definition}
\newtheorem{rmrk}[thm]{Remark}
\newtheorem{rmrks}[thm]{Remarks}
\newtheorem{claim}{Claim}
\newtheorem*{claim*}{Claim}
\newcommand{\ord}{\textnormal{Ord}}
\newcommand{\forces}{\Vdash}
\newcommand{\powerset}{\mathcal{P}}
\DeclareMathOperator{\dom}{dom}
\DeclareMathOperator{\ran}{ran}
\DeclareMathOperator{\crit}{crit}
\DeclareMathOperator{\Add}{Add}
\newcommand{\bbF}{\mathbb{F}}
\newcommand{\bbL}{\mathbb{L}}
\newcommand{\p}{\mathbb{P}}
\newcommand{\q}{\mathbb{Q}}
\newcommand{\bbR}{\mathbb{R}}
\newcommand{\dotq}{\dot{\mathbb{Q}}}
\newcommand{\dotp}{\dot{\mathbb{P}}}
\newcommand{\dotr}{\dot{\mathbb{R}}}
\title{Strongly compact cardinals and the continuum function}
\author{Arthur W. Apter}
\address{Department of Mathematics, Baruch College, City University of New York, New York NY 10010, USA
  \&  Department of Mathematics,  CUNY Graduate Center, 365 Fifth Avenue, New York NY 10016, USA} 
\email{awapter@alum.mit.edu} 
\urladdr{http://faculty.baruch.cuny.edu/aapter}
\author{Stamatis Dimopoulos}
\address{School of Mathematics, University of Bristol, Bristol BS8 1TW, England}
\email{stamatiosdimopoulos@gmail.com}
\urladdr{https://st-dimopoulos.github.io/}
\author{Toshimichi Usuba}
\address{Department of Pure and Applied Mathematics, Faculty of Science and Engineering,
Waseda University, Okubo 3-4-1, Shinjuku, Tokyo, 169-8555 Japan}
\email{usuba@waseda.jp}
\date{January 17, 2019}
\begin{document}

\begin{abstract}
We study the general problem of the behaviour of the continuum function
in the presence of non-supercompact strongly compact cardinals.
We begin by showing that it is possible to force violations of GCH
at an arbitrary strongly compact cardinal 
using only strong compactness as our initial assumption.
This result is due to the third author.
We then investigate realising Easton functions at and above
the least measurable limit of supercompact cardinals
starting from an initial assumption of the existence
of a measurable limit of supercompact cardinals. By results due to Menas,
assuming $2^\kappa = \kappa^+$,
the least measurable limit of supercompact cardinals $\kappa$
is provably in ZFC a non-supercompact strongly compact cardinal
which is not $\kappa^+$-supercompact.
We also consider generalisations of our earlier theorems
in the presence of more than one strongly compact cardinal.
We conclude with some open questions.
\end{abstract}

\maketitle

\section{Introduction}\label{sec:intro}
Easton's theorem (see \cite[Theorem 15.18]{jech})
was a milestone in set theory, which showed that ZFC by itself does not impose severe limitations on the behaviour of the continuum function at regular cardinals. 
However, when we bring large cardinals into the picture, the situation is more complicated. Often the mere violation of GCH at a large cardinal requires strong assumptions. 
The prototypical example is the case of a measurable cardinal.
By results of Gitik \cite{gitik-negation-sch, gitik-equiconsistency} 
(see also Mitchell \cite{mitchell-core-i}), the violation of GCH at a measurable cardinal
is equiconsistent with the existence of a measurable cardinal
$\kappa$ such that $o(\kappa) = \kappa^{++}$.

In this paper, we look at the possible behaviour of the continuum function in the presence of strongly compact cardinals that are not supercompact.
Our goal will be to work with strongly compact cardinals which possess no
non-trivial degrees of supercompactness. 
There are fundamental open questions in this regard, 
such as whether it is possible to force GCH at an arbitrary non-supercompact
strongly compact cardinal.

As motivation, let us mention that if we allow 
enough supercompactness assumptions, 
the continuum function at a non-supercompact strongly compact cardinal
can be manipulated fairly easily. 
For instance, to realise a $\Delta_2$-definable Easton function $F$, we can use a result due to Menas \cite[Theorem, pages 83--88]{menas-spct}, 
which shows that it is possible to realise $F$ while preserving the supercompactness of a cardinal $\kappa$. We can then use Magidor's Prikry iteration \cite{magidor-identity-crises}
that destroys all measurable cardinals below $\kappa$. 
In the resulting model, $\kappa$ is strongly compact, $\kappa$ is
the least measurable cardinal (and so is not $2^\kappa$-supercompact),
and $F$ is still realised. 

In a similar vein, suppose that $F$ is an Easton
function definable by a $\Delta_2$ formula
in a model $V$ of ZFC + GCH in which $\lambda$ is a supercompact
limit of supercompact cardinals.
The aforementioned theorem of Menas shows that it is possible
to force over $V$ to obtain a model $V_1$ in which
$\lambda$ remains a supercompact limit of supercompact cardinals
and the Easton function $F$ has been realised.

In $V_1$, let $\kappa < \lambda$ be the least measurable
limit of supercompact cardinals.
Another theorem of Menas shows that in $V_1$,
$\kappa$ is both strongly compact and not $2^\kappa$-supercompact.
In particular, by starting with hypotheses stronger than the
existence of a measurable limit of supercompact cardinals,
it is possible to force and construct a model containing a 
non-supercompact strongly compact cardinal in which $F$ has been realised.

Also, if we assume that the strongly compact cardinal has a sufficient degree of supercompactness, 
there are positive results. In \cite[Theorem 4.5]{hamkins-lottery}, 
Hamkins shows that if $\kappa$ is both strongly compact and
$\lambda$-supercompact, then
$\kappa$ can be forced to have 
its strong compactness and $\lambda$-supercompactness
indestructible under any $\kappa$-directed closed forcing that has size at most $\lambda$. 
In particular, it is possible to realise suitable Easton functions in the interval $[\kappa,\lambda)$.

The structure of this paper is as follows.
Section \ref{sec:intro} contains our introductory remarks.
Section \ref{sec:prelim} contains a discussion of our notation, terminology,
and some earlier results 
used later on.
We then separate our results into two categories, 
depending on whether we are interested in preserving a single strongly compact cardinal or 
more than one strongly compact cardinal. 
Our results for one strongly compact cardinal 
are found in Section  \ref{sec:local}.
We first answer a long-standing open question 
on the problem of whether it is possible to violate GCH at a strongly compact
cardinal using no stronger assumptions.
We show that just assuming $2^\kappa =
\kappa^+$ and $\kappa$ is strongly compact, it is possible to preserve 
the strong compactness of $\kappa$ while forcing any desired value for $2^\kappa$.
This result is due to the third author. 
We then address the question of what sort of 
Easton functions can be realised in the presence of a certain
non-supercompact strongly compact cardinal. 
We show that if $\kappa$ is the least measurable limit of supercompact cardinals 
and $F$ is an arbitrary Easton function defined on regular cardinals greater 
than or equal to $\kappa$, 
then it is possible to force to realise $F$ while preserving the 
fact that $\kappa$ is the least measurable limit of supercompact cardinals. 
The techniques used, however, will of necessity destroy many supercompact cardinals.
We therefore also present another result along the same lines,
where the Easton function realised has restrictions placed on it,
but all supercompact cardinals are preserved.

Our results for more than one supercompact cardinal 
appear in Section \ref{sec:global}.
We begin by showing how 
to iterate the partial orderings used in Section \ref{sec:local}
so as to preserve all measurable limits of supercompact cardinals simultaneously, 
while realising certain Easton functions 
at all of them.
We then prove a theorem which gives a partial answer to the problem
of the simultaneous preservation of all supercompact and measurable limits
of supercompact cardinals while violating GCH at each of them.
Finally, Section \ref{sec:ques} contains some open questions.

In order to present our results in full generality, we will make the minimal number
of assumptions on the structure of the class of strongly compact and supercompact
cardinals in our ground model. However, if we force over a model in which GCH 
and the property of {\em compactness coincidence} both hold\footnote{The
property of {\em compactness coincidence} states that the strongly compact and supercompact
cardinals coincide, except at measurable limit points.
Models satisfying compactness coincidence non-trivially were first constructed
by Kimchi and Magidor in \cite{kimchi-magidor}.
As we observe in the paragraph immediately preceding the statement of
Theorem \ref{thm:least-measurable-limit}, by work of Menas, a further
coincidence between these two classes is impossible.}
(such as the one constructed by the first author and Shelah
in \cite{apter-shelah-strong-equality}), then all strongly compact cardinals will be
preserved to our generic extension.
This is since the only strongly compact cardinals which exist
in a model satisfying compactness coincidence 
are the supercompact
cardinals and the measurable limits of supercompact cardinals.

\section{Preliminaries}\label{sec:prelim}

Our notation and terminology on forcing are
standard and follow \cite{cummings-handbook}. 
In particular, $p\leq q$ means {\em $p$ is stronger than $q$}, 
and we call a partial ordering {\em $\kappa$-directed closed}
if every directed subset of size less than $\kappa$ has a lower bound. 

We will say that {\em F is an Easton function for the
model V of ZFC} if 
$F$ satisfies the following conditions:

\begin{itemize}

\item Either $F \in V$ (if $F$ is a set) or $F$ is
definable over $V$ (if $F$ is a proper class).

\item $\dom(F)$ is a class of $V$-regular cardinals.

\item ${\rm rge}(F)$ is a class of $V$-cardinals.

\item For every $\kappa \in \dom(F)$, $F(\kappa) > \kappa$.

\item If $\kappa < \lambda$, $\kappa, \lambda \in \dom(F)$, $F(\kappa) \le F(\lambda)$.

\item For every $\kappa \in \dom(F)$, ${\rm cf}(F(\kappa)) > \kappa$.

\end{itemize}

\noindent A model $V^*$ of ZFC {\em realises an Easton function $F$} if
in $V^*$, for every regular cardinal $\delta$ in the domain of $F$,
$2^\delta = F(\delta)$.

We assume that 
the reader is familiar with the large cardinal notions of measurability, strong compactness, 
and supercompactness. See \cite{jech} for further details. As it is
a lesser known notion, we recall that a cardinal $\kappa$ is called \emph{tall} 
if for every $\lambda\geq\kappa$, 
there is an elementary embedding $j:V\to M$ with $\crit(j)=\kappa$, 
$j(\kappa)>\lambda$, 
and ${}^\kappa M\subseteq M$. 
In \cite{hamkins-tall}, Hamkins made a systematic study of tall cardinals.
We will use the following facts about tallness.

\begin{prop}{\rm (}\cite[Corollary 2.7]{hamkins-tall}{\rm )}\label{prop:tall-limit}
If $\kappa$ is measurable and a limit of tall cardinals, then $\kappa$ is tall.
\end{prop}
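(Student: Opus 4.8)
The plan is to reduce the statement to showing that $\kappa$ is $\lambda$-tall for each fixed $\lambda \geq \kappa$, and to obtain the required embedding by composing a normal ultrapower at $\kappa$ with a tallness embedding that lives just above $\kappa$ inside that ultrapower. First I would fix a normal measure $U$ on $\kappa$ and let $i \colon V \to N = \ult(V,U)$ be the associated embedding, so that $\crit(i) = \kappa$, $i(\kappa) > \kappa$, and ${}^\kappa N \subseteq N$. Since tallness is first-order expressible (each instance ``$\delta$ is $\theta$-tall'' being witnessed by an extender, hence a set), the hypothesis that $\kappa$ is a limit of tall cardinals is carried up by $i$: in $N$, the cardinal $i(\kappa)$ is a limit of tall cardinals. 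As $\kappa < i(\kappa)$, I can therefore choose in $N$ a cardinal $\delta^*$ with $\kappa < \delta^* < i(\kappa)$ that $N$ believes to be tall.

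Next, working inside $N$ and using that $N \models$``$\delta^*$ is tall'', hence $N \models$``$\delta^*$ is $\lambda$-tall'', I would fix an embedding $k \colon N \to M$, definable in $N$ and realised as an extender ultrapower (so that $M \subseteq N$), with $\crit(k) = \delta^*$, $k(\delta^*) > \lambda$, and $N \models {}^{\delta^*}M \subseteq M$. The candidate witness for the $\lambda$-tallness of $\kappa$ is then the composition $k \circ i \colon V \to M$. Its critical point is exactly $\kappa$: every $\alpha < \kappa$ is fixed by $i$ and then by $k$ (as $\alpha < \kappa < \delta^* = \crit(k)$), while $\kappa$ itself is moved, since $(k\circ i)(\kappa) = k(i(\kappa))$ and $\delta^* < i(\kappa)$ give $k(i(\kappa)) > k(\delta^*) > \lambda \geq \kappa$. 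In particular $(k\circ i)(\kappa) > \lambda$, which is precisely what $\lambda$-tallness demands of the image of $\kappa$.

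The step I expect to be the main obstacle is verifying the closure condition ${}^\kappa M \subseteq M$ in $V$, since $k$ only provides closure from the point of view of $N$. I would argue it in two stages: given a sequence $s \colon \kappa \to M$ in $V$, note first that $s$ is a $\kappa$-sequence of elements of $N$ (because $M \subseteq N$), so $s \in N$ by the closure ${}^\kappa N \subseteq N$; then, since $\kappa < \delta^*$ and $N \models {}^{\delta^*}M \subseteq M$, the model $N$ sees that $s \in M$, whence $s \in M$. Thus $k \circ i$ witnesses that $\kappa$ is $\lambda$-tall, and as $\lambda \geq \kappa$ was arbitrary, $\kappa$ is tall. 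The one point to double-check along the way is that the tallness embedding $k$ can indeed be taken with $M \subseteq N$ (i.e.\ as an ultrapower by an extender in $N$), which is exactly what makes the two-stage closure argument go through.
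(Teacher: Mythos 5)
Your argument is correct: composing the normal ultrapower $i\colon V\to N$ with a tallness (extender) embedding $k$ of some $\delta^*\in(\kappa,i(\kappa))$ that $N$ sees as tall, and verifying ${}^\kappa M\subseteq M$ via the two-stage closure argument through $N$, is exactly the standard proof. The paper itself gives no proof of this proposition --- it is quoted from Hamkins' paper on tall cardinals --- and your argument is essentially the one found in that cited source, including the point you flag (that $\lambda$-tallness embeddings can be taken to be extender ultrapowers, so $M\subseteq N$), which is established there and is also implicit in Proposition~\ref{prop:nice-emb}.
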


\begin{prop}{\rm (}\cite[Corollary 2.6]{hamkins-tall}{\rm )}\label{prop:nice-emb}
If $\kappa$ is tall, then for every $\lambda\geq \kappa$,
there is a $\lambda$-tallness embedding $j:V\to M$ with $\crit(j)=\kappa$ such that there is no $\lambda$-tall cardinal in $[\kappa,\lambda]$ in $M$. 
\end{prop}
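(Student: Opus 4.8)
The plan is to run a minimality argument on the extenders that witness $\lambda$-tallness, in direct analogy with the classical fact that a measurable cardinal carries a normal measure $U$ of least Mitchell rank, whose ultrapower $\ult(V,U)$ believes that $\crit(j_U)$ is not measurable. First I would record the extender reformulation of the hypothesis: $\lambda$-tallness of a cardinal $\delta\le\lambda$ is witnessed by a $(\delta,\eta)$-extender $E$, i.e.\ by the ultrapower $j_E\colon V\to\ult(V,E)$ with $\crit(j_E)=\delta$, $j_E(\delta)>\lambda$, and ${}^\delta\ult(V,E)\subseteq\ult(V,E)$, and that such a witness can be coded as a set of bounded rank (an element of $V_\theta$ for a fixed $\theta$ slightly above $\lambda$). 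Since $\kappa$ is tall and $\kappa\le\lambda$, the class of $\lambda$-tallness extenders whose critical point lies in $[\kappa,\lambda]$ is nonempty, and $\kappa$, being its least possible critical point, is itself $\lambda$-tall.

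Next I would order these witnesses by declaring $F\lhd E$ iff $F\in\ult(V,E)$ and $F$ is again a $\lambda$-tallness extender for some cardinal in $[\kappa,\lambda]$, and then select a $\lhd$-minimal $E$ from this class; set $j=j_E$ and $M=\ult(V,E)$. The whole force of minimality is that $M$ can contain no $\lambda$-tallness extender for any cardinal in $[\kappa,\lambda]$: if $M\models$ ``$\delta$ is $\lambda$-tall'' for some $\delta\in[\kappa,\lambda]$, the witnessing extender $F$ lies in $M$, hence in $V$, and — provided its witnessing properties transfer back to $V$ — satisfies $F\lhd E$, contradicting minimality. Since ``$\delta$ is $\lambda$-tall'' is equivalent, in $M$, to the existence in $M$ of such an $F$, this gives $M\models$ ``there is no $\lambda$-tall cardinal in $[\kappa,\lambda]$,'' which is exactly the conclusion, with $j(\kappa)>\lambda$ holding by construction.

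The genuine work, and where I expect the main obstacle, is concentrated in three points. First is the well-foundedness of $\lhd$, so that a minimal witness exists at all; I would obtain this by adapting Mitchell's proof that the Mitchell order is well-founded, converting a putative infinite descending chain $E_0\rhd E_1\rhd\cdots$ through the successive ultrapowers into an ill-founded $\in$-chain. Second, and hardest, is the faithful transfer of the tallness-witnessing property between the inner model $M$ and $V$: the delicate clause is closure, since an $F\in M$ with ${}^\delta\ult(M,F)\subseteq\ult(M,F)$ computed inside $M$ need not obviously satisfy ${}^\delta\ult(V,F)\subseteq\ult(V,F)$ in $V$, as $V$ has more $\delta$-sequences. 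Here I would exploit that $M$ is itself $\kappa$-closed, together with the agreement of $V$ and $M$ below the rank of the bounded witness, to push the closure up from $M$ to $V$. Third is the bookkeeping needed to reconcile minimality over the full interval $[\kappa,\lambda]$ with the requirement that $\crit(j)=\kappa$ exactly; minimizing over all critical points in $[\kappa,\lambda]$ is what clears the entire interval, so one must check that a $\lhd$-minimal witness can be taken with critical point the least admissible value $\kappa$, using that $\kappa$ is already $\lambda$-tall and an auxiliary reflection at the bottom of the interval.
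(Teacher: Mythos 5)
The paper does not actually prove this proposition: it is quoted directly from Hamkins' tall-cardinals paper (his Corollary 2.6), so there is no in-text argument to compare yours against, and your proposal must stand on its own. Its general shape --- produce a witness minimal in a Mitchell-style order, so that its target model contains no further witness --- is the right instinct, but the three points you yourself flag as ``the genuine work'' are exactly where the argument is missing, and at least two of them cannot be discharged the way you suggest. First, well-foundedness: the Mitchell order on extenders is \emph{not} well-founded in general (Neeman showed this for superstrong extenders, and Steel's well-foundedness theorems cover only restricted classes), so ``adapting Mitchell's proof'' is not a routine step; Mitchell's argument for normal measures uses normality and the amenability of measures to their ultrapowers in a way that has no direct analogue for the $(\delta,\eta)$-extenders you are ordering. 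You would need either to verify well-foundedness for the specific class of $\lambda$-tallness extenders or to replace $\lhd$ by an ordinal rank that provably decreases; neither is done.

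Second, and more fundamentally, the transfer step is not merely ``delicate'' --- it is the entire content of the statement, and the tools you invoke do not reach it. The conclusion concerns what $M$ \emph{believes}: if $M\models$ ``$\delta$ is $\lambda$-tall'' via $F\in M$, then to contradict minimality you must show that $F$ witnesses $\lambda$-tallness of $\delta$ \emph{in $V$}, and the problematic clause is ${}^{\delta}\ult(V,F)\subseteq\ult(V,F)$. A $\lambda$-tallness embedding gives only ${}^{\kappa}M\subseteq M$ and no agreement $V_\alpha\subseteq M$ beyond $\alpha=\kappa+1$, so for $\delta>\kappa$ there is no closure or rank-agreement available to push $M$'s computation of $\ult(M,F)$ up to $V$; even for $\delta=\kappa$, closure of $\ult(V,F)$ under $\kappa$-sequences in $V$ does not follow from closure of $\ult(M,F)$ inside $M$. (Defining $F\lhd E$ via the tallness of $F$ as computed \emph{inside} $\ult(V,E)$ would make the final contradiction immediate, but then well-foundedness of $\lhd$ becomes even less accessible.) Third, minimizing over all critical points in $[\kappa,\lambda]$ need not return a witness with critical point exactly $\kappa$, whereas restricting the order to extenders with critical point $\kappa$ makes minimality too weak to exclude witnesses for other $\delta\in(\kappa,\lambda]$; the ``auxiliary reflection at the bottom of the interval'' is a placeholder, not an argument. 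As written the proposal is a program rather than a proof; the published derivation rests on Hamkins' Theorem 2.5, a normal form for tallness embeddings that is precisely what makes the witnesses absolute enough to minimize over, and that ingredient is absent from your outline.
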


When it comes to strong compactness, we are interested in functions with the Menas property, which is defined as follows.

\begin{defn}\label{defn:menas-property}
Suppose $\kappa$ is a strongly compact cardinal. A function $f:\kappa\to \kappa$ has the \emph{Menas property} if for all $\lambda\geq\kappa$, there is a $\kappa$-complete, 
fine ultrafilter $U$ on $\powerset_\kappa \lambda$ such that for the ultrapower embedding $j_U : V \to M_U$,
$|[id]_U]|^{M_U}<j_U(f)(\kappa)$ holds in $M_U$.
\end{defn}

First used by Menas in \cite{menas-strong-compactness}, this property is quite helpful when lifting strong compactness embeddings through forcing. In \cite{hamkins-lottery}, Hamkins showed that fast function forcing at an arbitrary strongly compact cardinal adds a function with the Menas property.

\begin{prop}{\rm (}\cite[Theorem 1.7]{hamkins-lottery}{\rm )}\label{prop:fast-function-menas}
Suppose $\kappa$ is a strongly compact cardinal. Then the fast function forcing $\bbF_\kappa$ preserves the strong compactness of $\kappa$ and adds a fast function $f:\kappa\to \kappa$ that has the Menas property.
\end{prop}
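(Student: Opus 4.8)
The plan is to establish both conclusions at once by the standard lifting method, exploiting the defining feature of fast function forcing that the value of the generic function at the critical point may be prescribed freely underneath a highly closed tail. Fix $\lambda\geq\kappa$; I must produce in the extension a $\kappa$-complete fine ultrafilter on $\powerset_\kappa\lambda$ that simultaneously witnesses strong compactness and the Menas inequality. Let $G$ be $\bbF_\kappa$-generic over $V$ and $f=\bigcup G$ the fast function. In $V$, strong compactness yields a $\kappa$-complete fine ultrafilter $W$ on $\powerset_\kappa\lambda$; let $j=j_W\colon V\to M$ be its ultrapower and $s=[\mathrm{id}]_W$ the seed. Fineness gives $j''\lambda\subseteq s$, and since $s\in j(\powerset_\kappa\lambda)$ we automatically have $M\models|s|<j(\kappa)$. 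I fix once and for all an $M$-inaccessible $\delta$ with $|s|^M<\delta<j(\kappa)$, which exists because $j(\kappa)$ is inaccessible and a limit of inaccessibles in $M$.

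Next I analyse $j(\bbF_\kappa)=\bbF_{j(\kappa)}^M$. As $\crit(j)=\kappa$ and $\kappa$ is inaccessible in $M$, this forcing factors below the condition $p_\delta=\{\langle\kappa,\delta\rangle\}$, which forces $j(f)(\kappa)=\delta$, as a product $\bbF_\kappa\times\bbF_{\mathrm{tail}}$, where $\bbF_{\mathrm{tail}}$ gathers the conditions supported on $M$-inaccessibles in the interval $(\kappa,j(\kappa))$. The standard support analysis of fast function forcing shows that $\bbF_{\mathrm{tail}}$ is $\le\kappa$-closed in $M$. Since conditions of $\bbF_\kappa$ lie in $H_\kappa$ and are therefore fixed by $j$, we have $j''G=G$, so $G$ is exactly the lower coordinate of the generic we seek and no master condition below $\kappa$ is needed. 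It then remains to produce a generic $G_{\mathrm{tail}}$ for $\bbF_{\mathrm{tail}}$ over $M$ lying in $V[G]$; setting $H=G\times G_{\mathrm{tail}}$ I will lift $j$ to $j^{+}\colon V[G]\to M[H]$ with $j^{+}(f)(\kappa)=\delta$.

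Granting the lift, both conclusions follow quickly. Define $U$ on $(\powerset_\kappa\lambda)^{V[G]}$ in $V[G]$ by $X\in U\iff s\in j^{+}(X)$. Then $\crit(j^{+})=\kappa$ makes $U$ $\kappa$-complete and $j''\lambda\subseteq s$ makes $U$ fine, so $\kappa$ is strongly compact in $V[G]$. For the Menas property, let $j_U\colon V[G]\to M_U$ be the ultrapower by $U$ and let $k\colon M_U\to M[H]$ be the canonical factor map, so that $k\circ j_U=j^{+}$, $k([\mathrm{id}]_U)=s$, and $k\restriction(\kappa+1)=\mathrm{id}$; hence $k(j_U(f)(\kappa))=j^{+}(f)(\kappa)=\delta$. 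Since $s\in M$ and $|s|^M<\delta$, a fortiori $M[H]\models|s|<\delta$, that is, $M[H]\models|k([\mathrm{id}]_U)|<k(j_U(f)(\kappa))$. Pulling this statement back through the elementary map $k$ yields $M_U\models|[\mathrm{id}]_U|<j_U(f)(\kappa)$, which is precisely the Menas inequality at the fixed $\lambda$. As $\lambda\geq\kappa$ was arbitrary, $f$ has the Menas property.

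The main obstacle is the construction of the tail generic $G_{\mathrm{tail}}\in V[G]$: because $\bbF_{\mathrm{tail}}$ has size $j(\kappa)$ in $M$ while being only $\le\kappa$-closed, one cannot simply diagonalise against all of its dense sets in $M$ using the closure alone, and for mere strong compactness $M$ need not be closed under $\kappa$-sequences. This is exactly the point at which the special structure of fast function forcing must be exploited: the factoring at $\kappa$ is highly uniform, and the generic for the tail is to be assembled inside $V[G]$ from the closure of $\bbF_{\mathrm{tail}}$ together with a careful analysis, particular to fast function forcing, of how its dense subsets in the ultrapower $M=\{\,j(h)(s):h\in V\,\}$ are organised. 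I expect verifying that this assembly succeeds, equivalently that the lift $j^{+}$ exists in $V[G]$, to be the technical heart of the argument, whereas the passage from the lift to strong compactness and to the Menas inequality, sketched above, is routine.
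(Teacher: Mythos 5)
First, a point of comparison: the paper does not actually prove this proposition --- it is quoted verbatim from Hamkins \cite[Theorem 1.7]{hamkins-lottery} --- so the closest in-paper analogue of the argument you are attempting is the proof of Theorem \ref{thm:sc-continuum}, which runs the same lift-and-derive scheme for a lottery iteration. Measured against that, your proposal has a genuine gap, and it is exactly the one you flag yourself: the production of the tail generic. The way you propose to close it --- ``assembling $G_{\mathrm{tail}}$ inside $V[G]$'' from the closure of $\bbF_{\mathrm{tail}}$ together with an analysis of the dense sets of $M$ --- is not a viable route. For a mere strong compactness embedding, $M$ need not be closed under $\kappa$-sequences in $V$; the tail forcing has (in $M$) as many as $2^{j(\kappa)}$ dense subsets while, below your condition $p_\delta$, being only ${\le}\delta$-closed, and its closure holds in $M$ rather than in $V[G]$; no counting argument brings the dense sets to be met below the available closure, so no diagonalisation is possible. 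The standard fix --- and the one the authors use in Theorem \ref{thm:sc-continuum} --- is to give up on $G_{\mathrm{tail}}\in V[G]$ entirely: force with $\bbF_{\mathrm{tail}}$ \emph{externally} over $V[G]$, lift $j$ in the outer model, and then pull the derived fine ultrafilter back into $V[G]$ by a distributivity argument. Concretely, the ultrafilter is coded by a subset $B$ of $[\mathrm{id}]$ via an enumeration $\langle X_\xi\mid\xi<2^{\lambda^{<\kappa}}\rangle$ of $\powerset(\powerset_\kappa\lambda)^{V[G]}$, and since the tail is $(|[\mathrm{id}]|^+)^M$-closed in $M[G]$ it adds no new such subset, so $B\in M[G]\subseteq V[G]$. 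For this to work you cannot start from a measure on $\powerset_\kappa\lambda$ itself, as you do: you must take the measure on $\powerset_\kappa\theta$ for some $\theta\ge 2^{\lambda^{<\kappa}}$, so that $[\mathrm{id}]$ is large enough to index all of $\powerset(\powerset_\kappa\lambda)^{V[G]}$; with your choice $|[\mathrm{id}]|^M$ is only on the order of $\lambda^{<\kappa}$ and the coding fails.

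A second, smaller problem lies in your derivation of the Menas inequality. You assert that the factor map $k:M_U\to M[H]$ satisfies $k\restriction(\kappa+1)=\mathrm{id}$ and conclude $k\bigl(j_U(f)(\kappa)\bigr)=j^{+}(f)(\kappa)=\delta$. That $k$ fixes every ordinal below $\kappa$ is clear, but $k(\kappa)=\kappa$ requires $\kappa$ to lie in the seed hull $\{\,j^{+}(h)(s)\mid h\in V[G]\,\}$, which is not automatic for a non-normal fine ultrafilter: the seed $s$ may contain ordinals strictly between $\kappa$ and $j^{+}(\kappa)$, so functions such as $x\mapsto\otp(x\cap\kappa)$ need not return $\kappa$ at $s$. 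If $\crit(k)=\kappa$, your computation yields $j^{+}(f)(k(\kappa))$ at some point $k(\kappa)>\kappa$ over which you have no control. This step needs an explicit argument, or a reorganisation that (as in the paper's Theorem \ref{thm:sc-continuum} and its use of Definition \ref{defn:menas-property}) works directly with an ultrapower embedding fixed in advance rather than with a derived ultrafilter and its factor map.
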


Moreover, there are cases when ZFC implies the existence of such a function.

\begin{prop}{\rm (}\cite[Theorem 2.21 
and Proposition 2.31]{menas-strong-compactness}{\rm )}\label{prop:menas-property}
Suppose $\kappa$ is a measurable cardinal which is a limit of strongly compact cardinals. 
Then $\kappa$ is strongly compact, and
the function $f:\kappa\to \kappa$, where $f(\alpha)$ is the least strongly compact cardinal greater than $\alpha$, has the Menas property.
\end{prop}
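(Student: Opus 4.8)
The plan is to fix an arbitrary $\lambda\geq\kappa$ and manufacture a single $\kappa$-complete, fine ultrafilter $U$ on $\powerset_\kappa\lambda$ that simultaneously witnesses the strong compactness of $\kappa$ at $\lambda$ and the required instance of the Menas property. First I would fix a normal measure $W$ on $\kappa$. Since $\kappa$ is a limit of strongly compact cardinals, $f(\alpha)<\kappa$ is strongly compact for every $\alpha<\kappa$, so for each such $\alpha$ I may choose (using \textsf{AC}) a fine, $f(\alpha)$-complete ultrafilter $U_\alpha$ on $\powerset_{f(\alpha)}\lambda$; note that $\powerset_{f(\alpha)}\lambda\subseteq\powerset_\kappa\lambda$ because $f(\alpha)<\kappa$. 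I then form the $W$-sum $U:=\{X\subseteq\powerset_\kappa\lambda : \{\alpha<\kappa : X\cap\powerset_{f(\alpha)}\lambda\in U_\alpha\}\in W\}$.

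For the first assertion I would verify directly that $U$ is a $\kappa$-complete, fine ultrafilter on $\powerset_\kappa\lambda$, which by definition makes $\kappa$ strongly compact. Ultrafilterness is immediate because $W$ and each $U_\alpha$ are ultrafilters. Fineness holds since for $\beta<\lambda$ the set $\{P : \beta\in P\}$ meets each $\powerset_{f(\alpha)}\lambda$ in a $U_\alpha$-large set. For $\kappa$-completeness, given $\gamma<\kappa$ many members of $U$, the associated $W$-large sets have $W$-large intersection, which I intersect with the $W$-large tail of $\alpha$ satisfying $f(\alpha)>\gamma$; for such $\alpha$ the $f(\alpha)$-completeness (hence $\gamma^+$-completeness) of $U_\alpha$ closes off the intersection, so the intersection of the $\gamma$ sets lies in $U$. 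Here I use that $f$ is unbounded in $\kappa$.

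The substantive part is the Menas property. Let $j_U:V\to M_U=\ult(V,U)$, so that $\crit(j_U)=\kappa$. I would analyse $M_U$ as a two-step iteration: writing $j_W:V\to M_W$ for the ultrapower by $W$ and setting $\bar U:=j_W(\langle U_\alpha : \alpha<\kappa\rangle)(\kappa)$, the measure $\bar U$ is, in $M_W$, a fine and $j_W(f)(\kappa)$-complete ultrafilter on $\powerset^{M_W}_{j_W(f)(\kappa)}(j_W(\lambda))$, and there is a factor embedding relating $M_U$ to $\ult(M_W,\bar U)$ under which $[\mathrm{id}]_U$ corresponds to the seed $[\mathrm{id}]_{\bar U}$ of the top ultrapower. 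Since $\bar U$ is fine on $\powerset^{M_W}_{j_W(f)(\kappa)}(j_W(\lambda))$, every $Q$ in its domain has $M_W$-cardinality below $j_W(f)(\kappa)$, so this seed is an element of $\powerset^{M_U}_{j_U(f)(\kappa)}(j_U(\lambda))$ and therefore has $M_U$-cardinality below $j_U(f)(\kappa)$. Transferring back along the factor embedding, which fixes $\kappa$ and is order-preserving on the ordinals, yields $|[\mathrm{id}]_U|^{M_U}<j_U(f)(\kappa)$, exactly the Menas inequality. By elementarity and the definition of $f$, the bounding value $j_U(f)(\kappa)$ is moreover the least strongly compact cardinal of $M_U$ above $\kappa$.

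The hard part will be the bookkeeping of this last paragraph: justifying the two-step decomposition for the $W$-sum formed on the overlapping spaces $\powerset_{f(\alpha)}\lambda$, identifying $[\mathrm{id}]_U$ with the seed of $\bar U$ through the factor map (the layer index $\alpha$ is not literally recoverable from $P$, so this passes through the factor embedding rather than through a single representing function on $\powerset_\kappa\lambda$), and confirming that $j_U(f)(\kappa)=k(j_W(f)(\kappa))$, where $k$ is the top ultrapower embedding with $\crit(k)=j_W(f)(\kappa)$, so that the seed's cardinality is genuinely measured against the correct value. Everything else reduces to the routine verification of the ultrafilter axioms in the second paragraph.
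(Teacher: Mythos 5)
The paper offers no proof of this proposition --- it is quoted directly from Menas --- and your argument is exactly Menas's original one: the $W$-sum of fine $f(\alpha)$-complete ultrafilters on the spaces $\powerset_{f(\alpha)}\lambda$, with the Menas inequality extracted from the two-step factorisation through $\ult(\ult(V,W),\bar U)$, so there is nothing to compare beyond checking the details, which are essentially in order. The one claim I would not grant without argument is that the factor embedding $k_0:M_U\to\ult(M_W,\bar U)$ fixes $\kappa$: precisely because the layer index $\alpha$ is not recoverable from $P$, the ordinal $\kappa$ need not lie in the hull generated by the seed $[id]_{\bar U}$, so $k_0(\kappa)>\kappa$ is possible. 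This is harmless, however: writing $j'=k\circ j_W=k_0\circ j_U$, one has $k_0\bigl(|[id]_U|^{M_U}\bigr)=|[id]_{\bar U}|^{\ult(M_W,\bar U)}<k(j_W(f)(\kappa))=j'(f)(\kappa)\le j'(f)(k_0(\kappa))=k_0\bigl(j_U(f)(\kappa)\bigr)$ by monotonicity of $f$ and $k_0(\kappa)\ge\kappa$, and order-preservation of $k_0$ on the ordinals then yields $|[id]_U|^{M_U}<j_U(f)(\kappa)$ without ever deciding whether $k_0(\kappa)=\kappa$.
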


By an easy adaptation of the previous proposition, 
we can also obtain the following corollary, which will be used in our results.

\begin{cor}\label{cor:menas-property}
Suppose $\kappa$ is a measurable cardinal which is a limit of supercompact cardinals. 
Then $\kappa$ is strongly compact, and
the function $f:\kappa\to \kappa$, where $f(\alpha)$ is the least supercompact cardinal greater than $\alpha$, has the Menas property.
\end{cor}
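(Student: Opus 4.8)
The plan is to adapt the proof of Proposition \ref{prop:menas-property} almost verbatim, replacing ``strongly compact'' by ``supercompact'' at the one place where it matters.

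First I would establish that $\kappa$ is strongly compact. Since $\kappa$ is a measurable limit of supercompact cardinals, and every supercompact cardinal is strongly compact, $\kappa$ is a measurable limit of strongly compact cardinals. By Proposition \ref{prop:menas-property} (or the theorem of Menas it cites), such a $\kappa$ is itself strongly compact. So this half of the statement is immediate and requires no new argument.

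The substance is to verify that $f(\alpha) = $ the least supercompact cardinal above $\alpha$ has the Menas property, i.e.\ that for every $\lambda \geq \kappa$ there is a $\kappa$-complete fine ultrafilter $U$ on $\powerset_\kappa\lambda$ whose ultrapower $j_U : V \to M_U$ satisfies $|[\mathrm{id}]_U|^{M_U} < j_U(f)(\kappa)$. I would fix $\lambda \geq \kappa$ and, exactly as in Menas's argument for the strongly compact case, build the relevant fine ultrafilter on $\powerset_\kappa\lambda$ as a sum (or combination) of supercompactness measures drawn from the supercompact cardinals below $\kappa$ that approximate $\lambda$; since $\kappa$ is a limit of supercompact cardinals, there are enough of these to capture any $\lambda$, and each supercompact cardinal $\delta < \kappa$ carries a fine normal measure on $\powerset_\delta\lambda$. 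The key computation is to estimate $|[\mathrm{id}]_U|^{M_U}$ and compare it with $j_U(f)(\kappa)$. Here the only change from Proposition \ref{prop:menas-property} is that $j_U(f)(\kappa)$ now computes, inside $M_U$, the least \emph{supercompact} cardinal above $\kappa$ rather than the least strongly compact one; because supercompactness is preserved under the relevant reflection and the generating measures are supercompactness measures, the same inequality $|[\mathrm{id}]_U|^{M_U} < j_U(f)(\kappa)$ goes through.

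The main obstacle is checking that the value $j_U(f)(\kappa)$ is still large enough to dominate $|[\mathrm{id}]_U|^{M_U}$ once we insist on supercompactness in the definition of $f$. In principle the least supercompact cardinal above $\kappa$ could lie higher than the least strongly compact one, so one must confirm that $M_U$ correctly identifies $j_U(f)(\kappa)$ and that the elementarity of $j_U$ transfers the statement ``$f(\alpha)$ is the least supercompact above $\alpha$'' in a way compatible with the size estimate. I would handle this by noting that the ultrapower $M_U$ is closed enough (being a fine-ultrafilter ultrapower) to reflect supercompactness of the relevant cardinals below the critical bound, so that $j_U(f)(\kappa)$ in $M_U$ sits strictly above $|[\mathrm{id}]_U|^{M_U}$ for the same reasons as in the strongly compact case. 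This is precisely the ``easy adaptation'' the corollary advertises: the structural argument is identical, and only the witnessing large-cardinal property changes from strong compactness to supercompactness.
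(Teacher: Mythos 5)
Your proposal is correct and is precisely the ``easy adaptation'' the paper intends: deduce strong compactness of $\kappa$ from Proposition \ref{prop:menas-property} (every supercompact cardinal is strongly compact), and then rerun Menas's sum-of-measures construction using, at each $\alpha<\kappa$, a fine $f(\alpha)$-complete ultrafilter on $\powerset_{f(\alpha)}\lambda$ supplied by the supercompactness of $f(\alpha)$. The one cosmetic point is that the inequality $|[\mathrm{id}]_U|^{M_U}<j_U(f)(\kappa)$ falls out of {\L}o\'{s}'s theorem applied to the $D$-sum (the completeness of the $\alpha$th measure is exactly $f(\alpha)$, so $j_U(f)(\kappa)$ is the completeness of the second factor of the ultrapower), rather than from any closure or reflection properties of $M_U$.
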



We will want to show at certain junctures that no
new instances of large cardinals are created
in certain forcing extensions. 
This will follow by a corollary of 
Hamkins' work of \cite{hamkins-approximation}
on the approximation and cover properties
(which is a generalization of his
gap forcing results found in \cite{hamkins-gap-forcing}).
This corollary follows from 
\cite[Theorem 3 and Corollary 14]{hamkins-approximation}.
We therefore state as a
separate theorem
what is relevant for this paper, along
with some associated terminology,
quoting from \cite{hamkins-gap-forcing, hamkins-approximation}
when appropriate.
Suppose $\p$ is a partial ordering
which can be written as
${\mathbb Q} \ast \dot {\mathbb R}$, where
$|{\mathbb Q}| \le \delta$,
${\mathbb Q}$ is non-trivial, and
$\forces_{\mathbb Q} ``\dot {\mathbb R}$ is
$\delta^+$-directed closed''.
In Hamkins' terminology of
\cite{hamkins-approximation}, 
$\p$ {\em admits a closure point at $\delta$}.
Also, as in the terminology of
\cite{hamkins-gap-forcing, hamkins-approximation} and elsewhere,
an embedding
$j : V \to \bar M$ is
{\em amenable to $V$} when
$j \restriction A \in V$ for any
$A \in V$.
The specific corollary of
Hamkins' work from
\cite{hamkins-approximation} 
we will be using
is then the following.

\begin{thm}\label{tgf}

{\bf(Hamkins)}
Suppose that $V[G]$ is a generic
extension obtained by forcing with $\p$
that admits a closure point 
at some regular $\delta < \kappa$.
Suppose further that
$j: V[G] \to M[j(G)]$ is an elementary embedding
with critical point $\kappa$ for which
$M[j(G)] \subseteq V[G]$ and
${}^\delta{M[j(G)]} \subseteq M[j(G)]$ in $V[G]$.
Then $M \subseteq V$; indeed,
$M = V \cap M[j(G)]$. If the full embedding
$j$ is amenable to $V[G]$, then the
restricted embedding
$j \restriction V : V \to M$ is amenable to $V$.
If $j$ is definable from parameters
(such as a measure or extender) in $V[G]$,
then the restricted embedding
$j \restriction V$ is definable from the names
of those parameters in $V$.

\end{thm}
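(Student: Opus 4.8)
The plan is to deduce everything from the $\delta^{+}$-approximation and cover properties, which is exactly how Hamkins packages the statement in \cite[Theorem~3 and Corollary~14]{hamkins-approximation}. First I would fix a factorisation $\p = \q \ast \dotr$ witnessing the closure point at $\delta$, so $|\q| \le \delta$, $\q$ is non-trivial, and $\forces_{\q} ``\dotr$ is $\delta^{+}$-directed closed''. By \cite[Theorem~3]{hamkins-approximation}, the pair $V \subseteq V[G]$ satisfies the $\delta^{+}$-approximation and cover properties: every set of ordinals in $V[G]$ whose intersection with each $a \in V$ of size at most $\delta$ lies in $V$ is itself in $V$, and every set of at most $\delta$ ordinals of $V[G]$ is covered by such a set in $V$.

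Next I would transport the same hypothesis into $M$. Since $\crit(j) = \kappa > \delta$ we have $j(\delta) = \delta$, so by elementarity $j(\p) = j(\q) \ast j(\dotr)$ witnesses that $j(\p)$ admits a closure point at $\delta$ in $M$; as $j(G)$ is $M$-generic for $j(\p)$, the pair $M \subseteq M[j(G)]$ also satisfies the $\delta^{+}$-approximation and cover properties. Moreover $\crit(j) = \kappa > \delta$ gives $j \restriction \powerset(\delta)^{V[G]} = \mathrm{id}$, so $\powerset(\delta)^{V[G]} = \powerset(\delta)^{M[j(G)]}$, and the closure ${}^{\delta}M[j(G)] \subseteq M[j(G)]$ forces $M[j(G)]$ to compute sets of at most $\delta$ ordinals exactly as $V[G]$ does.

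The core step is to identify $M = V \cap M[j(G)]$, from which $M \subseteq V$ is immediate. Here I would use that an inner model enjoying the $\delta^{+}$-approximation and cover properties inside a fixed outer model is pinned down by its collection of subsets of $\delta$ (the Laver--Hamkins ground-model uniqueness). Applying this to $V$ inside $V[G]$ and to $M$ inside $M[j(G)]$, and combining it with the agreement $\powerset(\delta)^{V[G]} = \powerset(\delta)^{M[j(G)]}$ recorded above, one checks that the two recovered grounds intersect correctly, i.e.\ $M = V \cap M[j(G)]$. Concretely, for a set of ordinals $A \in M$ one reduces $A \in V$ to membership of the $\delta$-approximations $A \cap a$ via the approximation property of $V \subseteq V[G]$, using that each small $a \in V$ already lies in $M[j(G)]$ by $\delta$-closure; the reverse inclusion $V \cap M[j(G)] \subseteq M$ is symmetric. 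This transfer of small approximations between the two inner models is where the real work lies, and it is precisely the content of \cite[Corollary~14]{hamkins-approximation}; I expect it to be the main obstacle.

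Finally, for the amenability and definability clauses I would apply the approximation property once more, now to the graph of $j$. If $j$ is amenable to $V[G]$ then $j \restriction A \in V[G]$ for each $A \in V$; since $\ran(j \restriction A) \subseteq M \subseteq V$ and the $\delta$-approximations of its graph are restrictions to small pieces already shown to lie in $V$, the approximation property yields $j \restriction A \in V$, so $j \restriction V : V \to M$ is amenable to $V$. The definability clause follows because a measure or extender defining $j$ over $V[G]$ has a name whose interpretation restricts, by the same argument, to the corresponding parameter over $V$.
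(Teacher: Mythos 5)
Your proposal is correct and follows essentially the same route as the paper, which offers no proof of its own but simply states this as a corollary of \cite[Theorem~3 and Corollary~14]{hamkins-approximation}. Your sketch accurately reconstructs the outline of Hamkins' argument via the $\delta^{+}$-approximation and cover properties (with the technical core --- the transfer of small approximations between the two inner models --- rightly deferred to the cited Corollary~14), so there is nothing to correct relative to the paper's treatment.
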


It immediately follows from Theorem \ref{tgf}
that any cardinal $\kappa$ which is either 
$\lambda$-supercompact or measurable
in a forcing extension obtained
by a partial ordering that admits a closure point
below $\kappa$ (such as at $\omega$)
must also be 
$\lambda$-supercompact or measurable
in the ground model $V$.
In particular, if $\bar V$ is a forcing extension
of $V$ by a partial ordering admitting a closure point
at $\omega$
in which each supercompact cardinal and
each measurable limit of supercompact cardinals is preserved,
the classes of supercompact cardinals 
and measurable limits of supercompact cardinals in $\bar V$ remain
the same as in $V$.



\section{Results for one strongly compact cardinal}\label{sec:local}

We begin by showing that we can force violations of GCH 
at a strongly compact cardinal $\kappa$ without any stronger assumptions. 
Theorem \ref{thm:sc-continuum} and Corollary \ref{cor:consistency} 
are due to the third author.
Here, $\Add(\kappa, \delta)$ is the standard partial ordering for
adding $\delta$ Cohen subsets of $\kappa$.

\begin{thm}{\bf (Usuba)}\label{thm:sc-continuum}
Let $\kappa$ be a strongly compact cardinal.
There is then a forcing extension in which the strong compactness of $\kappa$ is indestructible under $\Add(\kappa,\delta)$ for all $\delta$.
\end{thm}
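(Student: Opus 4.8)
The plan is to prepare $\kappa$ with a Menas-function-driven reverse Easton iteration and then lift strong compactness embeddings through the subsequent $\Add(\kappa,\delta)$, exploiting the special feature of Cohen forcing that the pointwise image of a generic condition is again a single condition. By Proposition~\ref{prop:fast-function-menas} I may first force so that $\kappa$ remains strongly compact and carries a function $f\colon\kappa\to\kappa$ with the Menas property; relabelling, I assume $f$ is already present in $V$. Let $\p$ be the reverse Easton iteration of length $\kappa$ that forces with $\Add(\alpha,f(\alpha))$ at each closure point $\alpha$ of $f$ and trivially elsewhere, and let $G$ be $V$-generic for $\p$. Since $\crit(j)=\kappa$ and $f''\kappa\subseteq\kappa$, for any embedding $j\colon V\to M$ with critical point $\kappa$ the iteration $j(\p)$ factors as $\p*\dot{\mathbb Q}_\kappa*\dot{\mathbb P}_{\mathrm{tail}}$, where the stage-$\kappa$ forcing is $\dot{\mathbb Q}_\kappa=\Add(\kappa,j(f)(\kappa))$ and, because $f$ grows quickly, $\dot{\mathbb P}_{\mathrm{tail}}$ is very highly closed in $M[G]$. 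A routine $\kappa$-c.c.\ and factorisation analysis, of exactly the kind the Menas property is designed for, shows that $\kappa$ remains strongly compact in $V[G]$.

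For indestructibility, fix $\delta$ and a $V[G]$-generic $g$ for $\Add(\kappa,\delta)$; I must show $\kappa$ is still strongly compact in $V[G][g]$. Given $\lambda\ge\delta^{<\kappa}$, I use the Menas property to choose a fine $\kappa$-complete ultrafilter $U$ on $\powerset_\kappa\lambda$ whose ultrapower embedding $j\colon V\to M$ satisfies $|j''\lambda|^M<j(f)(\kappa)$, so in particular $j(f)(\kappa)>\delta$. I would then lift $j$ through $\p*\Add(\kappa,\delta)$ to an embedding $\hat\jmath\colon V[G][g]\to M[\hat G][\hat g]$. Here $\hat G$ is an $M$-generic for $j(\p)$ with $\hat G\restriction\kappa=G$, and $\hat g$ is an $M[\hat G]$-generic for $\Add(j(\kappa),j(\delta))$ containing the candidate master condition $q^*$, the union of the pointwise images of the conditions in $g$. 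The key point, special to Cohen forcing, is that $q^*$ really is a condition: its domain is contained in $j''(\delta\times\kappa)$, which has $M$-cardinality at most $|j''\lambda|^M\cdot\kappa<j(\kappa)$ by the Menas bound. For an arbitrary $\kappa$-directed closed forcing no such single master condition need exist, and this is precisely why the theorem is phrased for $\Add(\kappa,\delta)$.

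Granting the generics $\hat G$ and $\hat g$, the lift $\hat\jmath$ exists in $V[G][g]$, and then $W=\{A\subseteq\powerset_\kappa\lambda : j''\lambda\in\hat\jmath(A)\}$, computed in $V[G][g]$, is a fine, $\kappa$-complete ultrafilter on $\powerset_\kappa\lambda$; here I use that $\Add(\kappa,\delta)$ is $<\kappa$-closed, so $\powerset_\kappa\lambda$ is the same object in $V[G]$ and in $V[G][g]$. This $W$ witnesses the $\lambda$-strong compactness of $\kappa$ in $V[G][g]$, and since $\lambda\ge\delta$ was arbitrary, $\kappa$ is strongly compact there, which is exactly indestructibility under $\Add(\kappa,\delta)$.

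The hard part, and I suspect the real content of the theorem, is producing the $M$-generics $\hat G$ and $\hat g$ inside $V[G][g]$. Because $\kappa$ is only strongly compact and not supercompact, $M$ is closed merely under $<\kappa$-sequences, not under $\lambda$-sequences, so the usual recipe of finding a master condition and then diagonalising through the dense sets of $M$ breaks down: the image forcing $\Add(j(\kappa),j(\delta))^{M[\hat G]}$ carries far more than $j(\kappa)$ dense subsets in $M[\hat G]$ while being only $<j(\kappa)$-closed, and descending sequences built in $V[G][g]$ need not lie back in $M[\hat G]$ for lower bounds to be taken. Overcoming this is where the construction must genuinely use the resources at hand: the Menas bound, which supplies the master condition and keeps all relevant objects below $j(f)(\kappa)$; the large gaps of $f$, which make $\dot{\mathbb P}_{\mathrm{tail}}$ closed well past the number of dense sets one needs to meet; and the reservoir of Cohen generics that $\p$ has already added below $\kappa$, which can be transferred upward through $j$ to assemble $\hat g$. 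Carrying out this generic construction, rather than any of the bookkeeping above, is the step I expect to be genuinely delicate.
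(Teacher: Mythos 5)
Your overall architecture (Menas function, a preparatory Easton iteration below $\kappa$, a Cohen master condition, a derived fine ultrafilter) matches the paper's, but three of your choices create genuine gaps, and the step you defer as ``the hard part'' is resolved in the paper by a device you have not supplied. First, the paper's preparation forces at each closure point $\alpha$ with the \emph{lottery sum} $\bigoplus_{\beta<f(\alpha)}\Add(\alpha,\beta)$, not with $\Add(\alpha,f(\alpha))$. This is not cosmetic: in $j(\p)$ the stage-$\kappa$ poset is then a lottery in which one may opt for $\Add(\kappa,\delta)$ itself, so the very generic $g$ you are trying to survive serves as the $M[G]$-generic at stage $\kappa$. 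With your version the stage-$\kappa$ poset is $\Add(\kappa,j(f)(\kappa))$, for which $g$ is useless, and you would have to manufacture a fresh generic over $M[G]$ --- exactly the kind of object you correctly observe you cannot build. Second, your master condition $q^*=\bigcup j``g$ need not be a condition at all: its domain is essentially $\kappa\times j``\delta$, and $j``\delta$ need not be an element of $M$ for a mere strong compactness ultrapower (this is precisely the difference from supercompactness), so $q^*$ need not lie in $M[\hat G]$ and the phrase ``its $M$-cardinality'' is not justified. The paper repairs this with its Claim: there is in $M$ a function $\pi\colon[id]\to\theta$ with $\pi(j(\alpha))=\alpha$, and the master condition is defined on all of $\kappa\times([id]\cap j(\delta))$ --- a set that \emph{is} in $M$ and has $M$-size below $j(\kappa)$ by the Menas bound --- via $q(\beta,\gamma)=Q(\beta,\pi(\gamma))$. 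This definition also needs $Q\in M[j(G)]$, which again only holds because $g$ was absorbed at stage $\kappa$ through the lottery.

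Third, you should not be trying to produce $\hat G$ and $\hat g$ inside $V[G][g]$; the paper simply forces over $V[G][g]$ to obtain $G_{tail}$ and $h$, so the lifted embedding lives only in the outer model $V[G][g][G_{tail}][h]$. The point is that the fine ultrafilter on $\powerset_\kappa\lambda$ it induces is nevertheless an element of $V[G][g]$: fixing an enumeration $\langle X_\xi\mid\xi<2^{\lambda^{<\kappa}}\rangle$ of $\powerset(\powerset_\kappa\lambda)^{V[G][g]}$, the ultrafilter is coded by the set $B=\{\xi\in[id]\mid[id]\cap j(\lambda)\in j(\vec{X})_\xi\}$, and since $\p_{tail}\ast j(\dot \Add(\kappa,\delta))$ is $(|[id]|^+)^M$-closed over $M[G][g]$, the set $B$ already lies in $M[G][g]\subseteq V[G][g]$ (an inclusion which, once more, depends on $g$ having been used as the stage-$\kappa$ generic). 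This ``lift upstairs, pull the ultrafilter down by closure'' step is the missing idea; without it, and without the lottery and the function $\pi$, the argument does not close.
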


\begin{proof}
By forcing with the fast function forcing 
${\mathbb F}_\kappa$ if necessary, 
we can assume that there is a function $f^*:\kappa\to\kappa$ with the Menas property.

Define $\p=\langle \p_\alpha,\dotq_\beta\mid \beta<\alpha < \kappa\rangle$, 
an Easton support iteration of length $\kappa$, as follows.
Let $\p_0$ be the trivial forcing notion.
$\dotq_\alpha$ is then also a name for the trivial forcing notion, 
unless $\alpha$ is inaccessible and $f^*``\alpha\subseteq \alpha$.
In this case, $\dotq_\alpha$ is a name for the lottery sum
$$\bigoplus_{\beta<f^*(\alpha)} \Add(\alpha,\beta),$$
as defined in $V^{\p_\alpha}$.\footnote{If 
${\mathfrak A}$ is a collection of partial orderings, then
the {\em lottery sum} is the partial ordering
$\bigoplus {\mathfrak A} =
\{\langle \p, p \rangle \mid \p \in {\mathfrak A}$
and $p \in \p\} \cup \{1\}$, ordered with
$1$ above everything and
$\langle \p, p \rangle \le \langle \p', p' \rangle$ iff
$\p = \p'$ and $p \le p'$.
Intuitively, if $G$ is $V$-generic over
$\bigoplus {\mathfrak A}$, then $G$
first selects an element of
${\mathfrak A}$ (or as Hamkins says in \cite{hamkins-lottery},
``holds a lottery among the posets in
${\mathfrak A}$'') and
then forces with it. The
terminology ``lottery sum'' is due
to Hamkins, although the concept
of the lottery sum of partial
orderings has been around for quite
some time and has been referred to
at different junctures via the names
``disjoint sum of partial orderings'',
``side-by-side forcing'', and
``choosing which partial ordering to
force with generically''.}
Let $G\subseteq \p$ be $V$-generic. 
The arguments of \cite[Theorem 4.1]{hamkins-lottery}
show that $\kappa$ remains strongly compact in $V[G]$. We wish to show that in $V[G]$, 
the strong compactness of $\kappa$ is indestructible 
under $\Add(\kappa,\delta)$ for all $\delta$. 
Fix $\delta$, 
and let $g\subseteq \Add(\kappa,\delta)$ be $V[G]$-generic. If we let $Q=\bigcup g$, 
then $Q : \kappa\times \delta\to 2$ is a function.

Let $\lambda>\max(\kappa, \delta)$ be a regular cardinal, and 
fix a cardinal $\theta \ge 2^{\lambda^{< \kappa}}$.
Using the Menas property of $f^*$, let $j:V\to M$ be an ultrapower embedding by a $\kappa$-complete, 
fine ultrafilter $U\in V$ on $\powerset_\kappa\theta$ with $\crit(j)=\kappa$ such that $|[id]_U|^M<j(f^*)(\kappa)$. Since there is no source of confusion, we will drop the subscript from elements of $M$ and denote them as $[h]$. As usual, $j``\theta\subseteq [id]$, so $|\theta|^M \le |[id]|^M$.

\begin{claim}\label{claim:pi}
There is in $M$ a function $\pi:[id]\to \theta$ such that for all $\alpha<\theta$, $\pi(j(\alpha))=\alpha$.
\end{claim}

\begin{proof}
For each $\alpha<\theta$, let $g_\alpha:\powerset_\kappa\theta\to V$ be a function such that $[g_\alpha]=\alpha$. 
Without loss of generality, we can assume that $g_\alpha(x)$ is
defined for every $x \in \powerset_\kappa \theta$. 
Let $h:\powerset_\kappa\theta\to V$ be the function given by 
$$h(x)=\{\langle \alpha,g_\alpha(x)\rangle \mid \alpha\in x\}.$$
By its definition, $h(x)$ is a function with domain $x$.
It follows that $[h]$ 
is a function with domain $[id]$, 
and for each $\alpha<\theta$, $[h](j(\alpha))=[g_\alpha]=\alpha$. 
This completes the proof,
since we can easily use $[h]$ to define a function $\pi$ with the required properties.
\end{proof}

We now proceed by lifting $j$ through $\p\ast 
\dot \Add(\kappa,\delta)$. 
As usual, $j(\p)$ can be factorised as $\p\ast \dotq \ast \dotp_{tail}$, 
where $\dotq$ is a name for the lottery sum $\bigoplus_{\beta<j(f^*)(\kappa)}\Add(\kappa,\beta)$, 
and $\dotp_{tail}$ is a name for the remaining stages through $j(\kappa)$. 
Using $G$ as an $M$-generic filter for $\p$, we can form $M[G]$. 
Also, since $\delta< \theta \le |[id]|^M < j(f^*)(\kappa)$, 
we can choose to force below a condition in $\q=(\dotq)_G$ 
that opts for $\Add(\kappa,\delta)$. 
Thus, we can use $g$ as an $M[G]$-generic filter for $\q$. Furthermore, note that since $j(f^*)(\kappa)>|[id]|^M$, 
$\p_{tail} = (\dotp_{tail})_{G\ast g}$ is at least $(|[id]|^+)^M$-closed in $M[G][g]$.

Force over $V[G][g]$ to add a generic filter $G_{tail}$ for $\p_{tail}$. 
Using $G_{tail}$ as an $M[G][g]$-generic filter for $\p_{tail}$, since $j `` G \subseteq G$,
we can lift $j$ in $V[G][g][G_{tail}]$ to
$$j:V[G]\to M[j(G)],$$
where $j(G)=G\ast g \ast G_{tail}$. 
In order to further lift $j$ through $\Add(\kappa,\delta)$, 
we will use a master condition argument. 
Consider the function $\pi$ given by Claim \ref{claim:pi}, 
and note that $|[id]\cap j(\delta)|^M\leq |[id]^M|<j(\kappa)$. 
Define in $M[j(G)]$ a function $q:\kappa\times ([id]\cap j(\delta))\to 2$ 
given by $q(\langle \beta,\gamma \rangle)=
Q(\langle \beta,\pi(\gamma) \rangle)$ if 
$\pi(\gamma) < \delta$, 
and $0$ otherwise.
Clearly, $q$ is a condition in $j(\Add(\kappa,\delta))$.

\begin{claim}\label{claim:q}
$q\leq j(p)$ for all $p\in g$.
\end{claim}

\begin{proof}
By elementarity and the fact that $\crit(j)=\kappa$, for each $p\in g$, $j(p)$ is a function with domain $j``\dom(p)=\{\langle \beta,j(\gamma)\rangle \mid \langle \beta,\gamma\rangle\in \dom(p)\}$.
Hence, $\dom(j(p))\subseteq \dom(q)$. 
For $\langle \beta,j(\gamma)\rangle\in \dom(j(p))$, 
we have $j(p)(\langle \beta,j(\gamma) \rangle)=
p(\langle \beta,\gamma \rangle)=
Q(\langle \beta,\gamma \rangle)=
Q(\langle \beta,\pi(j(\gamma)) \rangle)=q(\langle \beta, j(\gamma) \rangle)$.
\end{proof}

Force over $V[G][g][G_{tail}]$ to add a generic filter 
$h\subseteq j(\Add(\kappa,\delta))$ containing $q$. 
By Claim \ref{claim:q}, we can lift $j$ in $V[G][g][G_{tail}][h]$ to
$$j:V[G][g]\to M[j(G)][h].$$ 

Let $\vec{X}=\langle X_\xi\mid \xi< 2^{\lambda^{<\kappa}}\rangle \in V[G][g]$ 
be an enumeration of $\powerset(\powerset_\kappa\lambda)^{V[G][g]}$. 
In $M[j(G)][h]$, consider the set $B=\{\xi\in [id]\mid [id]\cap j(\lambda)\in j(\vec{X})_\xi\}$. 
Since $\p_{tail}\ast j(\dot \Add(\kappa,\alpha))$ 
is at least $(|[id]|^+)^M$-closed in $M[G][g]$, 
$B\in M[G][g]\subseteq V[G][g]$. 
Hence, $W=\{X_\xi \in \powerset (\powerset_\kappa \lambda)^{V[G][g]}
\mid j(\xi)\in B\} \in V[G][g]$, 
and since $\theta \ge 2^{\lambda^{< \kappa}}$,
$W$ is easily seen to be a $\kappa$-complete, fine ultrafilter 
on $\powerset_\kappa\lambda$. 
Thus, $\kappa$ is $\lambda$-strongly compact in $V[G][g]$.
Since $\lambda$ can be chosen arbitrarily large, we have shown that $\kappa$ remains strongly compact in $V[G][g]$.
This completes the proof of Theorem \ref{thm:sc-continuum}.
\end{proof}

\begin{cor}\label{cor:consistency}
The existence of a strongly compact cardinal is equiconsistent with the existence of a strongly compact cardinal where GCH fails. 
In particular, assuming $2^\kappa = \kappa^+$ and $\kappa$ is strongly compact,
it is possible to force to preserve the strong compactness of $\kappa$ while
also forcing any desired value for $2^\kappa$.
\end{cor}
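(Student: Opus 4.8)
The plan is to obtain the corollary as a direct application of Theorem \ref{thm:sc-continuum}, followed by a single instance of Cohen forcing at $\kappa$. One direction of the equiconsistency is immediate, since a strongly compact cardinal at which GCH fails is in particular strongly compact. For the reverse direction, together with the ``in particular'' assertion, I would begin with a model $V$ in which $\kappa$ is strongly compact and $2^\kappa = \kappa^+$.

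First I would apply Theorem \ref{thm:sc-continuum} to pass to a forcing extension $V[G]$ in which the strong compactness of $\kappa$ is indestructible under $\Add(\kappa,\delta)$ for every $\delta$. The key preservation fact to record is that this preparation leaves $2^\kappa = \kappa^+$ intact. Both the optional fast function forcing $\bbF_\kappa$ and the Easton iteration $\p$ of that proof have size $\kappa$ — here one uses only that $\kappa$ is inaccessible, so that $2^{<\kappa} = \kappa$ and each stage of $\p$ is of size below $\kappa$ — and both are therefore $\kappa^+$-cc. A count of nice names for subsets of $\kappa$ then bounds $2^\kappa$ in $V[G]$ by $(\kappa^+)^\kappa = \kappa^+$, where the final equality uses $2^\kappa = \kappa^+$ in $V$; as $2^\kappa \ge \kappa^+$ always holds, $2^\kappa = \kappa^+$ persists into $V[G]$.

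Now, given any cardinal $\lambda > \kappa$ with $\cf(\lambda) > \kappa$ — the Easton constraints governing a legal value of $2^\kappa$ — I would force over $V[G]$ with $\Add(\kappa,\lambda)$. By the indestructibility secured above, $\kappa$ remains strongly compact in the resulting extension. Since $\kappa$ is inaccessible and $\cf(\lambda) > \kappa$ yields $\lambda^{<\kappa} = \lambda$, the poset $\Add(\kappa,\lambda)$ is $\kappa$-closed and $\kappa^+$-cc of size $\lambda$, so the standard computation gives $2^\kappa = \lambda$ in the extension. Taking $\lambda$ arbitrary establishes the ``in particular'' claim, while taking, say, $\lambda = \kappa^{++}$ produces a model with a strongly compact cardinal at which GCH fails; together with the trivial converse, this yields the equiconsistency.

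The one step demanding genuine (if routine) attention is the preservation of $2^\kappa = \kappa^+$ through the preparation of Theorem \ref{thm:sc-continuum}: the equiconsistency requires the freedom to realise small values of $2^\kappa$ such as $\kappa^{++}$, and this freedom is available only if the preparatory forcing has not already inflated the continuum at $\kappa$. As indicated, this reduces entirely to the chain-condition and nice-name counting argument, so no serious obstacle remains.
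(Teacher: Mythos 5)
Your argument is correct and is exactly the intended derivation: the paper states this corollary without proof as an immediate consequence of Theorem \ref{thm:sc-continuum}, obtained by forcing with $\Add(\kappa,\lambda)$ over the prepared model. Your additional check that the preparation (fast function forcing plus the Easton iteration, both of size $\kappa$ and hence $\kappa^+$-cc) preserves $2^\kappa=\kappa^+$ is the right point to verify and is handled correctly.
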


We now proceed by looking at a specific case of a non-supercompact strongly compact cardinal, the least measurable limit of supercompact cardinals.
By (the proof of) \cite[Theorem 2.22]{menas-strong-compactness}, if $\kappa$ is
the least measurable limit of supercompact cardinals, then $\kappa$
isn't $2^\kappa$-supercompact. Thus, if $2^\kappa = \kappa^+$,
then $\kappa$ isn't $\kappa^+$-supercompact, i.e.,
$\kappa$ exhibits no non-trivial degree of supercompactness.

\begin{thm}\label{thm:least-measurable-limit}
Suppose GCH holds, $\kappa$ is the least measurable limit of supercompact cardinals, 
and $F$ is an arbitrary Easton function. 
There is then
a forcing extension in which 
$\kappa$ remains the least measurable limit of supercompact cardinals,
$\kappa$ exhibits no non-trivial degree of supercompactness, 
and $F$ is realised at all regular cardinals greater than or equal to $\kappa$.
\end{thm}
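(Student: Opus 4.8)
The plan is to marry the third author's iteration from Theorem~\ref{thm:sc-continuum} to a highly closed Easton tail that realises $F$ above $\kappa$, and then to read off the three conclusions from the Menas property, from the closure of the tail, and from Hamkins' Theorem~\ref{tgf}. Since $\kappa$ is a measurable limit of supercompact cardinals, Corollary~\ref{cor:menas-property} furnishes, with no auxiliary forcing, a Menas function $f$ with $f(\alpha)$ the least supercompact cardinal above $\alpha$; I would use this $f$ in the role played by $f^{*}$ in the proof of Theorem~\ref{thm:sc-continuum}.

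I would force with $\mathbb{P}^{*}=\p_\kappa * \dot{\Add}(\kappa,F(\kappa)) * \dot{\mathbb{R}}$, where $\p_\kappa$ is the Easton-support lottery iteration of Theorem~\ref{thm:sc-continuum} relative to $f$, and $\dot{\mathbb{R}}$ names the Easton-support product $\prod_{\mu>\kappa}\Add(\mu,F(\mu))$ over the $V$-regular $\mu>\kappa$ in $\dom(F)$ (a class forcing when $F$ is a proper class). After prepending a copy of $\Add(\omega,1)$ I may assume $\mathbb{P}^{*}$ admits a closure point at $\omega$: the lottery sum of $\alpha$-directed closed posets is again $\alpha$-directed closed, so every nontrivial stage of $\p_\kappa$ is at least $\omega_1$-directed closed, while $\Add(\kappa,F(\kappa))$ and $\mathbb{R}$ are $\kappa$- and $\kappa^{+}$-directed closed respectively. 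Routine Easton book-keeping, using $\GCH$ in $V$, then shows that $\mathbb{P}^{*}$ preserves cardinals and cofinalities and forces $2^{\mu}=F(\mu)$ for every regular $\mu\geq\kappa$: the value at $\kappa$ comes from $\Add(\kappa,F(\kappa))$ together with the fact that $\mathbb{R}$ is $\kappa^{+}$-closed and so adds no subset of $\kappa$, and the values above $\kappa$ come from $\mathbb{R}$.

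For the large-cardinal conclusions I would first reprove, with $F(\kappa)$ in place of $\delta$, that $\kappa$ stays strongly compact: given $\lambda$, I take a Menas ultrapower $j\colon V\to M$ on $\powerset_\kappa\theta$ with $\theta\geq 2^{\lambda^{<\kappa}}$ and $|[id]|^{M}<j(f)(\kappa)$, lift through $\p_\kappa$ by opting in the stage-$\kappa$ lottery for $\Add(\kappa,F(\kappa))$ (legal because $F(\kappa)<j(f)(\kappa)$), lift through $\Add(\kappa,F(\kappa))$ by the master-condition argument built from the map $\pi$ of Claim~\ref{claim:pi}, and finally lift through $\mathbb{R}$. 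Here $j(\mathbb{R})$ is a product over $M$-regular cardinals strictly above $j(\kappa)$, and since $|[id]|^{M}<j(f)(\kappa)<j(\kappa)$ gives $j(\kappa)>\theta\geq 2^{\lambda}$, the poset $j(\mathbb{R})$ is highly closed in $M$; forcing to add the missing $M$-generic for $j(\mathbb{R})$ over a further extension, exactly as the tail generic $G_{tail}$ is added in the proof of Theorem~\ref{thm:sc-continuum}, I lift $j$ through $\mathbb{R}$, and the fine ultrafilter $W$ on $\powerset_\kappa\lambda$ is recovered and shown to lie in the final model by the same closure argument that places the defining set $B$ back inside it. As $\lambda$ was arbitrary, $\kappa$ remains strongly compact, hence measurable. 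That $\kappa$ stays a \emph{limit} of supercompact cardinals holds because $\p_\kappa$ forces nontrivially only at inaccessible cardinals closed under $f$, hence never at the successor points of the enumeration of the supercompact cardinals below $\kappa$, of which there are cofinally many, while the forcing at and above $\kappa$ is $\kappa$-directed closed; a standard lifting argument then preserves the supercompactness of those cardinals, so cofinally many survive. Because $\mathbb{P}^{*}$ has a closure point at $\omega<\kappa$, Theorem~\ref{tgf} creates no new supercompact or measurable cardinals below $\kappa$, whence no measurable limit of supercompacts appears there and $\kappa$ remains the least one. Finally, $\GCH$ and the leastness of $\kappa$ make $\kappa$ fail to be $\kappa^{+}$-supercompact in $V$ by Menas' theorem, and the closure point at $\omega$ lets Theorem~\ref{tgf} transport this failure to the extension, so $\kappa$ carries no non-trivial degree of supercompactness.

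The main obstacle is the final lift through the class-sized tail $\mathbb{R}$. One must produce an $M$-generic for $j(\mathbb{R})$ extending $j``G_{\mathbb{R}}$ while certifying that the resulting ultrafilter $W$ lands back in the final model $V[G][g][G_{\mathbb{R}}]$; the coordinates of $j(\mathbb{R})$ to be filled lie above $j(\kappa)$ and include those not in the pointwise image $j``\dom(\mathbb{R})$, and it is precisely the Menas bound $|[id]|^{M}<j(f)(\kappa)$ --- forcing $j(\kappa)$ above $\theta\geq 2^{\lambda^{<\kappa}}$ --- that supplies the closure making the class-forcing book-keeping go through at each $\lambda$. Note that it is $F$, not $j(F)$, that is realised above $\kappa$, since $\mathbb{R}$ was defined in $V$ to realise $F$ and $j(F)$ appears only in the target model $M$; reconciling the two along the lifted embedding, rather than in the continuum pattern of the final model, is the delicate point.
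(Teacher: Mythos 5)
There are two genuine gaps, one of which you flag but do not close, and one of which you assert away with a false claim.

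First, your iteration below $\kappa$ is just the lottery iteration of Theorem~\ref{thm:sc-continuum} relative to $f$, with non-trivial forcing only at inaccessible limits of supercompact cardinals. This does not preserve the supercompact cardinals below $\kappa$, and your remark that ``a standard lifting argument then preserves the supercompactness of those cardinals'' is exactly what fails. Having prepended $\Add(\omega,1)$, you are in the situation of the Hamkins--Shelah superdestructibility theorem \cite[Theorem, page 550]{hamkins-shelah}: for each supercompact $\delta<\kappa$ there is an inaccessible limit of supercompact cardinals $\alpha_0\in(\delta,\kappa)$ (these are cofinal in $\kappa$ since $\kappa$ is Mahlo and the limits of supercompacts form a club), the stage-$\alpha_0$ lottery adds a subset of $\alpha_0\ge\delta$, and the forcing decomposes as a non-trivial small piece followed by a sufficiently closed piece adding that subset; hence $\delta$ is no longer $\alpha_0$-supercompact in the extension. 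So $\kappa$ does not remain a limit of supercompact cardinals, and the whole conclusion collapses. The paper's iteration avoids this precisely by interleaving Laver preparations at the supercompact cardinals below $\kappa$ (arranged to start above the supremum of the previously treated supercompacts), making them indestructible under the $\alpha$-directed closed remainder; the paper's Remark immediately after Theorem~\ref{thm:least-measurable-limit} even records that the few supercompacts for which this preparation cannot be fitted in \emph{are} destroyed. Your construction omits the preparation everywhere.

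Second, your treatment of $\kappa$ itself attempts to lift the strong compactness embedding through the class-sized tail $\mathbb{R}=\prod_{\mu>\kappa}\Add(\mu,F(\mu))$, and you yourself identify this as ``the main obstacle'' without resolving it. It cannot be resolved by the Menas bound: $|[id]|^{M}<j(f)(\kappa)$ gives closure only up to roughly $\theta$, whereas lifting through $\mathbb{R}$ requires an $M$-side generic for $j(\mathbb{R})$ compatible with $j``H$, and $\bigcup j``H$ restricted to a coordinate $j(\mu)$ is the entire stage-$\mu$ generic object, of size $F(\mu)$, which need not be a condition in $\Add(j(\mu),j(F)(\mu))$; no master condition exists and no amount of closure of $M$ supplies one for a class of coordinates. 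The paper sidesteps this entirely: it establishes strong compactness of $\kappa$ only in $V[G][g]$ (before forcing with $\mathbb{R}$), then invokes Easton's lemma to see that $\mathbb{R}$ is $(\kappa^+,\infty)$-distributive over $V[G][g]$, so $\powerset(\kappa)$ is unchanged and $\kappa$ remains \emph{measurable} in $V[G][g][H]$; since $\kappa$ is still a measurable limit of supercompact cardinals there, Menas' theorem (Corollary~\ref{cor:menas-property}) returns its strong compactness for free. You should replace your attempted lift through $\mathbb{R}$ with this distributivity-plus-Menas argument, and add the Laver preparation stages to the iteration.
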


\begin{proof}
Let $f:\kappa\to \kappa$ be the function where $f(\alpha)$ is the least supercompact cardinal greater than $\alpha$. 
We define
 $\p=\langle \p_\alpha,\dotq_\beta\mid \beta<\alpha < \kappa\rangle$, an Easton support iteration
 of length $\kappa$.
 We start by letting $\p_0 = \Add(\omega, 1)$. 
 For $0 \le \alpha < \kappa$, $\dotq_\alpha$ is then defined as follows:
\begin{enumerate}
  \item If $\alpha$ is supercompact, but not the least supercompact 
 cardinal greater than an inaccessible limit of supercompact cardinals, $\dotq_\alpha$ is a name for the Laver preparation 
 \cite{laver-preparation} 
 of $\alpha$, defined using only $\sigma$-directed closed partial orderings.
 Here, $\sigma<\alpha$ is the least inaccessible cardinal greater than the supremum of 
 the supercompact cardinals below $\alpha$, or the least inaccessible cardinal
 if there are no supercompact cardinals below $\alpha$.
 We explicitly note that since there is no
 supercompact limit of supercompact cardinals below $\kappa$,
 the first non-trivial stage in the realisation of $\dotq_\alpha$
 can be assumed not to occur until after stage $\sigma$.
  \item If $\alpha$ is an inaccessible limit of supercompact cardinals, 
  $\dotq_\alpha$ is a name for $\bigoplus_{\beta<f(\alpha)}\Add(\alpha,\beta)$.
  \item In all other cases, $\dotq_\alpha$ is a name for the trivial forcing notion.
\end{enumerate}

Let $G\subseteq \p$ be $V$-generic.
In $V[G]$, we force with the Easton product $$\prod_{\delta \ge \kappa} \Add(\delta, F(\delta)),$$ where
$\delta \ge \kappa$ is a ($V$ or $V[G]$)-regular cardinal.
Let $g \times H$ be $V[G]$-generic over
$$\Add(\kappa, F(\kappa)) \times \prod_{\delta > \kappa} \Add(\delta, F(\delta)) =
\Add(\kappa, F(\kappa)) \times {\mathbb R}  = \prod_{\delta \ge \kappa} \Add(\delta, F(\delta)).$$
Standard arguments (see \cite[proof of Theorem 15.18]{jech}) show that $F$ is realised in $V[G][g][H]$
at all cardinals greater than or equal to $\kappa$.
We wish to show that $\kappa$ remains the least measurable limit of
supercompact cardinals in $V[G][g][H]$ (so $\kappa$ is strongly compact in $V[G][g][H]$)
and also exhibits no non-trivial degree of supercompactness in $V[G][g][H]$.

We begin by showing that $\kappa$ remains a limit of supercompact cardinals in $V[G][g][H]$.
For this, note that in $V$, the set 
$\{\alpha<\kappa\mid \alpha$ is supercompact and 
is not the least supercompact cardinal
greater than an inaccessible limit of supercompact cardinals$\}$ 
is unbounded below $\kappa$. For each such $\alpha$, the 
partial ordering $\q_\alpha$ forces $\alpha$ to have its supercompactness
indestructible under $\alpha$-directed closed forcing.
Since all the stages of $\p$ above $\alpha$ are at least $\alpha$-directed closed, 
$\alpha$ remains indestructibly supercompact in $V[G]$. 
Moreover, since the forcing $\Add(\kappa,F(\kappa))\times {\mathbb R}$ 
is itself $\alpha$-directed closed in $V[G]$, $\alpha$ is supercompact in $V[G][g][H]$ as well.

We now show that $\kappa$ remains measurable in $V[G][g][H]$.
We first note that by Corollary \ref{cor:menas-property}, $f$ has the Menas property.
Further, in the definition of ${\mathbb P}$, if $\alpha$ is an inaccessible limit of supercompact cardinals,
the first non-trivial stage of forcing after stage $\alpha$ does not occur until after $f(\alpha)$.
Therefore, the proof of Theorem \ref{thm:sc-continuum} immediately yields that $\kappa$
is strongly compact in $V[G][g]$. However, by Easton's lemma \cite[Lemma 15.19]{jech},
$\bbR$ is $(\kappa^+, \infty)$-distributive in $V[G][g]$. This consequently implies that
$\kappa$ is measurable in $V[G][g][H]$.

To complete the proof of Theorem \ref{thm:least-measurable-limit}, it only remains to show that in
$V[G][g][H]$, $\kappa$ remains the least measurable limit of supercompact cardinals
and exhibits no non-trivial degree of supercompactness.
Towards a contradiction, suppose $\gamma < \kappa$ is a measurable limit of supercompact
cardinals in $V[G][g][H]$.
By its definition, ${\mathbb P} \ast (\dot {\prod_{\delta \ge \kappa} \Add(\delta, F(\delta)))}$ 
admits a closure point at $\omega$.
Hence, by our remarks in the paragraph immediately following Theorem \ref{tgf},
$\kappa$ exhibits no non-trivial degree of supercompactness in $V[G][g][H]$.  
In addition, these same remarks imply that
$\gamma$ must be a measurable limit of supercompact cardinals in $V$, which contradicts
that in $V$, $\kappa$ is the least measurable limit of supercompact cardinals.
This completes the proof of Theorem \ref{thm:least-measurable-limit}.
\end{proof}

\begin{rmrk}
Our choice of $\kappa$ as
the least measurable limit of supercompact cardinals together with GCH 
was in order to highlight the fact that we do not assume 
any non-trivial degree of supercompactness for $\kappa$. 
However, the same proof would go through if $\kappa$ were an arbitrary measurable limit of supercompact cardinals.
\end{rmrk}

\begin{rmrk}
The partial ordering ${\mathbb P}$ of Theorem \ref{thm:least-measurable-limit} will destroy the
supercompactness of any cardinal $\alpha < \kappa$ which is in $V$ the least supercompact cardinal 
above an inaccessible limit of supercompact cardinals.
To see this, note that we can write ${\mathbb P} = {\mathbb P}^1 \ast \dot{\mathbb P}^2$, where
$\p^1$ is forcing equivalent to a partial ordering having size less than $\alpha$,
and $\dot{\mathbb P}^2$ is forced to add a subset of some $\gamma > \alpha$,
$\gamma$ below the least supercompact cardinal in $V$ above $\alpha$.
By \cite[Theorem, page 550]{hamkins-shelah}, it is then the case that in
$V[G]$, $\alpha$ is no longer $\gamma$-supercompact.
Hence, by the closure properties of $\prod_{\delta \ge \kappa} \Add(\delta, F(\delta))$,
$\alpha$ is no longer $\gamma$-supercompact in $V[G][g][H]$ as well.
However, if we are willing to impose some restrictions on our Easton function,
it is possible to prove a version of Theorem \ref{thm:least-measurable-limit} in which
all supercompact cardinals below $\kappa$ are preserved. In particular, we have the following theorem.
\end{rmrk}

\begin{thm}\label{thm:least-measurable-limit-ii}
Suppose GCH holds, $\kappa$ is the least measurable limit of supercompact cardinals, 
and $F$ is an Easton function such that $F(\kappa)$ is regular and
$F(\delta) = F(\kappa)$ for every $\delta \in [\kappa, F(\kappa))$. 
There is then
a forcing extension in which 
$\kappa$ remains the least measurable limit of supercompact cardinals,
$\kappa$ exhibits no non-trivial degree of supercompactness, 
$F$ is realised at all regular cardinals greater than or equal to $\kappa$,
and the supercompact cardinals below $\kappa$ are the same as in the ground model. 
\end{thm}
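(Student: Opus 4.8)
The plan is to re-run the proof of Theorem~\ref{thm:least-measurable-limit} with one change to the preparation: instead of leaving the cardinals that are least above an inaccessible limit of supercompacts untouched, I would Laver-prepare \emph{every} supercompact cardinal below $\kappa$. The point is that once such an $\alpha$ is prepared it becomes indestructible under $\alpha$-directed closed forcing, and since the Easton product $\Add(\kappa,F(\kappa))\times\bbR=\prod_{\delta\ge\kappa}\Add(\delta,F(\delta))$ is $\kappa$-directed closed, hence $\alpha$-directed closed, $\alpha$ survives into the final model. This is exactly what goes wrong in Theorem~\ref{thm:least-measurable-limit}: there the unprepared $\alpha$ are destroyed by \cite[Theorem, page 550]{hamkins-shelah}, because the initial segment $\p_\alpha$ collapses, once the relevant lottery is resolved, to a poset of size less than $\alpha$, and a fresh subset of some $\gamma\in(\alpha,\,\text{next supercompact})$ is then added; once $\alpha$ is prepared, the preparation at $\alpha$ is a genuine size-$\alpha$ stage, that theorem no longer applies, and indestructibility takes over.

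Concretely, I would define $\p$ as the length-$\kappa$ Easton iteration that starts with $\Add(\omega,1)$, places the lottery sum $\bigoplus_{\beta<f(\alpha)}\Add(\alpha,\beta)$ at each inaccessible limit $\alpha$ of supercompacts (retained so that $\kappa$'s strong compactness lifts as in Theorems~\ref{thm:sc-continuum} and~\ref{thm:least-measurable-limit}), and now places the $\sigma$-directed closed Laver preparation at \emph{every} supercompact $\alpha<\kappa$. Preparing the previously-excluded $\alpha$ is legitimate: for such an $\alpha$ the supercompacts below $\alpha$ are bounded by the inaccessible limit $\lambda<\alpha$ just beneath it, so $\p_\alpha$ is forcing equivalent to a poset of size less than $\alpha$ and $\alpha$ is still supercompact at stage $\alpha$ by the L\'evy--Solovay theorem; and because each preparation is $\sigma$-directed closed with $\sigma$ strictly above all smaller supercompacts, preparing the larger supercompacts never disturbs the indestructibility already fixed at the smaller ones.

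Granting this, the rest is bookkeeping against the earlier results. In $V[G]$ all supercompacts below $\kappa$ are indestructible, so they persist into $V[G][g][H]$ since the top forcing is $\kappa$-directed closed; the closure point at $\omega$ and Theorem~\ref{tgf} rule out new supercompacts below $\kappa$, so the supercompacts below $\kappa$ are precisely those of $V$. For $\kappa$ itself, the preparation keeps it a measurable limit of supercompacts with Menas function $f$ (Corollary~\ref{cor:menas-property}); the argument of Theorem~\ref{thm:sc-continuum} lifts through $\Add(\kappa,F(\kappa))$ to give strong compactness in $V[G][g]$; Easton's lemma gives measurability, and hence again strong compactness, in $V[G][g][H]$; and the closure point with Theorem~\ref{tgf} gives that $\kappa$ stays the least measurable limit of supercompacts with no non-trivial supercompactness. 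The realisation of $F$ is the standard computation.

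The delicate points, where I expect the real work and where the hypotheses on $F$ earn their place, are two. First, for the Menas embedding $j\colon V\to M$ used to lift $\kappa$'s strong compactness through $\p$, one must know that $\dotq_\alpha$ is computed the same way in $V$ and in $M$ for $\alpha<\kappa$, i.e.\ that $j(\p)\restriction\kappa=\p$; since ``$\alpha$ is supercompact'' need not reflect into the ultrapower $M$ (which is closed only under $<\kappa$-sequences, so may lack the measures on $\powerset_\alpha\gamma$ for $\gamma\ge\kappa$ that witness full supercompactness), the preparation must be set up with enough absoluteness — for instance via a Laver function fixed in advance — for this to hold, and checking this after enlarging the supercompact case is the main technical burden. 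Second, the hypotheses that $F(\kappa)$ is regular and that $F\equiv F(\kappa)$ on $[\kappa,F(\kappa))$ are what make the block $[\kappa,F(\kappa))$ of the Easton product uniform: when one lifts any embedding across this block, every master condition has the single domain size $F(\kappa)$, and $F(\kappa)<j(\delta)$ holds automatically since $j(\delta)\ge j(\kappa)>F(\kappa)$, after which the $F(\kappa)$-directed closed tail $\prod_{\delta\ge F(\kappa)}\Add(\delta,F(\delta))$ is absorbed by distributivity. For a general Easton function the growing widths on this block force a coordinate-by-coordinate analysis and complicate the Easton support of the master conditions, and it is precisely this that the flatness hypothesis is designed to avoid.
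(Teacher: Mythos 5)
Your high-level idea --- Laver-prepare \emph{every} supercompact cardinal below $\kappa$, including those least above an inaccessible limit of supercompacts --- is indeed where the paper starts, but as you have set it up the lifting argument at $\kappa$ breaks, and the breakage is precisely what forces the paper to change both the auxiliary functions and the forcing on the block $[\kappa,F(\kappa))$. The problem is the closure of $\p_{tail}$ in the target model. If $\alpha$ is the least supercompact above an inaccessible limit $\mu$ of supercompacts, then the supremum of the supercompacts below $\alpha$ is $\mu$ itself, so your preparation of $\alpha$ is only $\sigma$-directed closed for $\sigma$ the least inaccessible above $\mu$. Now reflect this: in the Menas ultrapower $j\colon V\to M$ used in Theorems \ref{thm:sc-continuum} and \ref{thm:least-measurable-limit}, the first non-trivial stage of $j(\p)$ above $\kappa$ becomes the Laver preparation of the least $M$-supercompact above $\kappa$, which is only $\sigma^*$-directed closed for $\sigma^*$ roughly the least inaccessible of $M$ above $\kappa$ --- far below $|[id]|^M$ and below $\theta$. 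Then $\p_{tail}$ is no longer $(|[id]|^+)^M$-distributive, the set $B$ in the ultrafilter construction need not lie in $M[G][g]$, and you do not get $W\in V[G][g]$. (Theorem \ref{thm:least-measurable-limit} avoids this exactly by skipping those $\alpha$, so that the first non-trivial stage of $j(\p)$ after $\kappa$ lies beyond $j(f)(\kappa)>|[id]|^M$.) Your observation that preparing the larger supercompacts does not disturb the indestructibility of the smaller ones addresses preservation of the small supercompacts, but not this reflection problem at $\kappa$; and your first ``delicate point'' (absoluteness of the iteration in $M$) is not where the real difficulty lies.

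The paper's fix differs from your plan in two coupled ways. First, both $f$ and $\sigma$ are redefined via \emph{tall} cardinals ($f(\alpha)$ is the least tall cardinal above $\alpha$, and $\sigma$ is the least tall cardinal above the supremum $\gamma$ of the smaller supercompacts, with $\sigma\in(\gamma,\alpha)$ by \cite[Lemma 2.1]{apter-cummings}); by Proposition \ref{prop:nice-emb} one can choose an $F(\kappa)$-tallness extender embedding $j$ with no tall cardinal of $M$ in $[\kappa,F(\kappa)]$, so the first non-trivial stage of $j(\p)$ after $\kappa$ is pushed beyond $j(f)(\kappa)>F(\kappa)$. The price is that $j$ is now a tallness embedding rather than a strong compactness ultrapower (the ultrafilters of Corollary \ref{cor:menas-property} control the least supercompact of $M$ above $\kappa$, not the least tall cardinal of $M$ above $\kappa$), so one only preserves measurability of $\kappa$ directly and recovers strong compactness via Menas from ``measurable limit of supercompacts''. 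Second, and consequently, the block $[\kappa,F(\kappa))$ is \emph{not} handled by an Easton product at all: the paper forces with $\Add(\kappa,F(\kappa))$ followed by the partial ordering $\bbR$ supplied by the proof of \cite[Theorem 3.13]{hamkins-tall}, which keeps $\kappa$ tall and makes $2^\delta=F(\kappa)$ for every $\delta\in[\kappa,F(\kappa))$; only above $F(\kappa)$ does the ordinary Easton product appear. This is the actual role of the hypotheses ``$F(\kappa)$ is regular and $F\equiv F(\kappa)$ on $[\kappa,F(\kappa))$'' --- that forcing can only produce a constant continuum function on the block --- not the uniformity of master conditions for an Easton product, which in Theorem \ref{thm:least-measurable-limit} is in any case dispatched by distributivity rather than by lifting an embedding.
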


\begin{proof}
We first note that since $\kappa$ is strongly compact, by \cite[Theorem 2.11]{hamkins-tall},
$\kappa$ is also tall.
Next, let $f:\kappa\to \kappa$ be the function where $f(\alpha)$ is the least tall cardinal greater than $\alpha$. 
Since every supercompact cardinal is clearly also tall and $\kappa$ is a limit of supercompact cardinals,
$f(\alpha)$ has a value less than $\kappa$ for every $\alpha < \kappa$
and so is well-defined.
It is also the case that $f$ has the {\em Menas property for tallness} \cite[page 75]{hamkins-tall}, i.e.,
for every ordinal $\theta$, there is an elementary embedding $j : V \to M$ with ${\rm crit}(j) = \kappa$,
${}^\kappa M \subseteq M$, and $j(f)(\kappa) \ge \theta$.
To see this, let $\theta \ge \kappa$. Using Proposition \ref{prop:nice-emb}, we
fix for $\kappa$ a $\theta$-tallness embedding $j:V\to M$ such that: 
\begin{itemize}
 \item $\crit(j)=\kappa$.
  \item $j(\kappa)>\theta$.
 \item ${}^\kappa M \subseteq M$.
 \item $j$ is given by a $(\kappa,\theta)$-extender embedding.
 \item There is no $\theta$-tall cardinal in $M$ in the interval $[\kappa,\theta]$.
\end{itemize}
Since in $M$, $j(f)(\kappa)$ is the least tall cardinal greater than $\kappa$,
and because there are no $\theta$-tall cardinals in $M$ in the interval $[\kappa,\theta]$,
it follows that $j(f)(\kappa) > \theta$.

We will now proceed 
along the same lines as the proof of Theorem \ref{thm:least-measurable-limit}.
We begin by
fixing for $\kappa$ an $F(\kappa)$-tallness embedding $j:V\to M$ such that: 
\begin{itemize}
 \item $\crit(j)=\kappa$.
 \item $j(\kappa)>F(\kappa)$.
 \item ${}^\kappa M \subseteq M$.
 \item $j$ is given by a $(\kappa,F(\kappa))$-extender embedding.
 \item There is no $F(\kappa)$-tall cardinal in the interval $[\kappa,F(\kappa)]$.
\end{itemize} 
We next define
 $\p=\langle \p_\alpha,\dotq_\beta\mid \beta<\alpha < \kappa\rangle$, an Easton support iteration
 of length $\kappa$.
 We start by letting $\p_0 = \Add(\omega, 1)$. 
 For $0 \le \alpha < \kappa$, $\dotq_\alpha$ is then defined as follows:
\begin{enumerate}
  \item If $\alpha$ is supercompact, 
 $\dotq_\alpha$ is a name for the Laver preparation 
 \cite{laver-preparation} 
 of $\alpha$, defined using only $\sigma$-directed closed partial orderings.
 Here, $\sigma<\alpha$ is redefined as 
 the least tall cardinal greater than $\gamma$, the supremum of 
 the supercompact cardinals below $\alpha$, or the least
 tall cardinal if there are no supercompact cardinals below $\alpha$.
 As before, since there is no supercompact limit of supercompact cardinals below $\kappa$
 and $\alpha < \kappa$ is supercompact, $\gamma < \alpha$.
 Also, by \cite[Lemma 2.1]{apter-cummings}, $\sigma \in (\gamma, \alpha)$
 (where we take $\gamma = \omega$ if there are no supercompact cardinals below $\alpha$).
 Therefore, in analogy to the proof of Theorem \ref{thm:least-measurable-limit},
 the first non-trivial stage in the realisation of $\dotq_\alpha$
 can be assumed not to occur until after stage $\sigma$.
  \item If $\alpha$ is an inaccessible limit of supercompact cardinals, 
  $\dotq_\alpha$ is a name for $\bigoplus_{\beta<f(\alpha)}\Add(\alpha,\beta)$.
  \item In all other cases, $\dotq_\alpha$ is a name for the trivial forcing notion.
\end{enumerate}

Let $G \subseteq \p$ be $V$-generic, and let $g_0 \subseteq \Add(\kappa, F(\kappa))$
be $V[G]$-generic.
By clause (2) in the definition of $\p$, the choice of $j$, 
and the fact $f$ has the Menas property for tallness,
it is possible to opt for $\Add(\kappa, F(\kappa))$ at stage $\kappa$ in $M$
in the definition of $j(\p)$.
In addition, by clause (1) in the definition of $\p$, the first non-trivial stage
in $M$ in the definition of $j(\p)$ after $\kappa$ does not occur until after stage $j(f)(\kappa)$,
the least tall cardinal in $M$ greater than $\kappa$.
This means that the proof of \cite[Theorem 3.13]{hamkins-tall} unchanged remains valid and
allows us to infer the existence of a $\kappa$-directed closed, $(\kappa^+, \infty)$-distributive,
cardinal and cofinality preserving partial ordering ${\mathbb R} \in V[G][g_0]$ such that
if $g_1 \subseteq {\mathbb R}$ is $V[G][g_0]$-generic, in $V[G][g_0][g_1]$,
$\kappa$ is a tall cardinal, and $2^\delta = F(\kappa)$ for every $\delta \in [\kappa, F(\kappa))$.
If we now let 
$g_2 \subseteq \prod_{\delta \ge F(\kappa)} \Add(\delta, F(\delta))$ (the Easton product for
$\delta \ge \kappa$ a regular cardinal in any of the models $V$, $V[G]$, $V[G][g_0]$, or $V[G][g_0][g_1]$) be
$V[G][g_0][g_1]$-generic, then $F$ is realised in $V[G][g_0][g_1][g_2]$ at all regular cardinals $\delta \ge \kappa$.
In addition, the same arguments as found in the proof of
  Theorem \ref{thm:least-measurable-limit} show that in $V[G][g_0][g_1][g_2]$, 
$\kappa$ remains the least measurable limit of supercompact cardinals, and
$\kappa$ exhibits no non-trivial degree of supercompactness.
By the definition of $\p$, all ground model supercompact cardinals less than $\kappa$
have been made indestructible and hence are preserved to $V[G][g_0][g_1][g_2]$.
Consequently, by our remarks in the paragraph immediately following Theorem \ref{tgf},
the supercompact cardinals below $\kappa$ in $V[G][g_0][g_1][g_2]$ are the
same as in $V$.
This completes the proof of Theorem \ref{thm:least-measurable-limit-ii}.
\end{proof}

\section{Results for more than one strongly compact cardinal 
}\label{sec:global}

In the previous section, we successfully violated GCH and even realised certain 
Easton functions above one non-supercompact strongly compact cardinal,
the least measurable limit of supercompact cardinals.
%
%
We now present results in which we handle more than one measurable
limit of supercompact cardinals.
In what follows, let $A = \{\delta \mid \delta$ is a measurable limit of supercompact cardinals$\}$.
Define $\Omega = \sup(A)$ if $A$ is a set, or $\Omega = \ord$ if $A$ is a proper class.
Let $f : \Omega \to \Omega$ be the function where $f(\alpha)$ is the least
supercompact cardinal greater than $\alpha$.

\begin{thm}\label{thm:global}
Suppose $V$ is a model of GCH containing more than one measurable
limit of supercompact cardinals.
Let $F$ be an Easton function defined on measurable limits of supercompact cardinals such that $F(\kappa)<f(\kappa)$ 
for any $\kappa\in \dom(F)$. 
Then there is a forcing extension in which the measurable limits of supercompact cardinals 
are the same as in $V$ and $F$ is realised. 
\end{thm}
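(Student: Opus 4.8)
The plan is to run the single-cardinal construction of Theorem~\ref{thm:least-measurable-limit} \emph{simultaneously} at every measurable limit of supercompact cardinals. Recall $f(\alpha)$ is the least supercompact above $\alpha$, so that for each $\kappa\in A$ the restriction $f\restriction\kappa$ has the Menas property by Corollary~\ref{cor:menas-property}. I would define an Easton support iteration $\p=\langle\p_\alpha,\dotq_\alpha\mid\alpha<\Omega\rangle$ with $\p_0=\Add(\omega,1)$ (so that $\p$ admits a closure point at $\omega$) and with $\dotq_\alpha$ determined by the following clauses, clause~(2) taking precedence over clause~(1): (2) if $\alpha$ is an inaccessible limit of supercompact cardinals, then $\dotq_\alpha$ names the lottery sum $\bigoplus_{\beta<f(\alpha)}\Add(\alpha,\beta)$; (1) otherwise, if $\alpha$ is supercompact but not the least supercompact above an inaccessible limit of supercompacts, then $\dotq_\alpha$ names the Laver preparation of $\alpha$ using $\sigma$-directed closed forcing, where $\sigma<\alpha$ is the least inaccessible above the supremum of the supercompacts below $\alpha$; (3) in all other cases $\dotq_\alpha$ is trivial. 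Ordering clause~(2) first sends every supercompact limit of supercompacts to the lottery, which is exactly what guarantees $\sigma<\alpha$ in clause~(1) (replacing the appeal to the leastness of $\kappa$ made in Theorem~\ref{thm:least-measurable-limit}). I would then take $G$ to be $V$-generic below the condition that opts, at each $\kappa\in\dom(F)$, for $\Add(\kappa,F(\kappa))$; this option lies inside the stage-$\kappa$ lottery precisely because $F(\kappa)<f(\kappa)$.

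To see that $F$ is realised and that each $\kappa\in A$ stays a limit of supercompacts I would argue as in Theorem~\ref{thm:least-measurable-limit}. After opting for $\Add(\kappa,F(\kappa))$ there are no non-trivial stages of $\p$ in $(\kappa,f(\kappa))$, since there are no supercompacts and hence no inaccessible limits of supercompacts there and $f(\kappa)$ itself falls under clause~(3); thus the tail of $\p$ above $\kappa$ is $\kappa^+$-directed closed and adds no subsets of $\kappa$. Combined with the $\kappa$-c.c.\ of $\p_\kappa$ and GCH in $V$, this yields $2^\kappa=F(\kappa)$. For the limit-of-supercompacts property, the clause-(1) cardinals are made indestructible under $\alpha$-directed closed forcing, the tail of $\p$ above each such $\alpha$ is $\alpha$-directed closed, and the counting argument of Theorem~\ref{thm:least-measurable-limit} shows that the clause-(1) supercompacts are cofinal below every $\kappa\in A$; hence cofinally many supercompacts below $\kappa$ survive. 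The supercompacts destroyed by clauses~(2) and~(3) are themselves never in $A$, since the least supercompact above an inaccessible limit of supercompacts is not itself a limit of supercompacts.

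To preserve the measurability of each $\kappa\in A$ I would replay the lifting argument of Theorem~\ref{thm:sc-continuum}. Using the Menas property, fix a fine-ultrafilter embedding $j:V\to M$ with $\crit(j)=\kappa$ and $|[\mathrm{id}]|^M<j(f)(\kappa)$. Since $j(f)\restriction\kappa=f\restriction\kappa$, in $M$ the cardinal $\kappa$ remains an inaccessible limit of supercompacts, so stage $\kappa$ of $j(\p)$ is the lottery $\bigoplus_{\beta<j(f)(\kappa)}\Add(\kappa,\beta)$; as $F(\kappa)<f(\kappa)\le j(f)(\kappa)$ I may opt there for the same $\Add(\kappa,F(\kappa))$ chosen in $G$, and the Menas inequality makes the remaining tail of $j(\p)$ sufficiently closed in $M$ to build its generic and lift $j$ to $V[G_{\le\kappa}]$, exactly as in Theorem~\ref{thm:sc-continuum}. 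Thus $\kappa$ is strongly compact, hence measurable, in $V[G_{\le\kappa}]$, and since the tail of $\p$ above $\kappa$ is $\kappa^+$-directed closed it adds no subsets of $\kappa$ and preserves this measurability into $V[G]$. Together with the previous paragraph, $\kappa$ is a measurable limit of supercompacts in $V[G]$ (and automatically strongly compact by Corollary~\ref{cor:menas-property}).

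Finally, since $\p$ admits a closure point at $\omega$, Theorem~\ref{tgf} and the remarks following it show that any cardinal measurable or supercompact in $V[G]$ is already so in $V$; consequently any measurable limit of supercompacts in $V[G]$ is measurable in $V$ and a limit of $V$-supercompacts, hence lies in $A$. Combined with the preservation above, the measurable limits of supercompacts of $V[G]$ are exactly those of $V$, and $F$ is realised. The main obstacle is the third paragraph: one must run the Theorem~\ref{thm:sc-continuum} lift uniformly at \emph{every} $\kappa\in A$ in the presence of the interleaved Laver preparations, and verify that the single global choice of $\Add(\kappa,F(\kappa))$ options is simultaneously compatible with each individual lifting. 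This is exactly where $F(\kappa)<f(\kappa)$ is essential, as it is what places $\Add(\kappa,F(\kappa))$ inside both the true stage-$\kappa$ lottery and its image under $j$.
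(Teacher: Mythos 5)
Your proposal is correct and follows essentially the same route as the paper: the same Easton iteration interleaving Laver preparations at most supercompact cardinals with lottery sums at inaccessible limits of supercompacts, the same Menas-property lifting at each measurable limit of supercompacts $\kappa$ using $F(\kappa)<f(\kappa)$ to keep the tail of the iteration sufficiently closed, and the same appeal to the closure point at $\omega$ to rule out new measurable limits of supercompacts. The only (harmless) deviation is that the paper hard-codes $\Add(\alpha,F(\alpha))$ at measurable limits of supercompact cardinals instead of placing a lottery there and selecting that option generically.
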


%
%
\begin{proof}
Intuitively, we will proceed by iterating the forcing notion used
in the proof of Theorem \ref{thm:least-measurable-limit}. More formally, let
$\langle \kappa_\alpha \mid \alpha < \Omega \rangle$ enumerate in increasing
order the measurable limits of supercompact cardinals.
We define
 $\p=\langle \p_\alpha,\dotq_\beta\mid \beta<\alpha < \Omega\rangle$, an Easton support iteration
 of length $\Omega$.
 We start by letting $\p_0 = \Add(\omega, 1)$. 
 For $0 \le \alpha < \Omega$, $\dotq_\alpha$ is then defined as follows:
\begin{enumerate}
  \item If $\alpha$ is supercompact, but neither the least supercompact 
 cardinal greater than an inaccessible limit of supercompact cardinals
 nor a supercompact limit of supercompact cardinals, $\dotq_\alpha$ is a name for the Laver preparation 
 \cite{laver-preparation} 
 of $\alpha$, defined using only $\sigma$-directed closed partial orderings.
 Here, $\sigma<\alpha$ is the least inaccessible cardinal greater than the supremum of 
 the supercompact cardinals below $\alpha$, or the least inaccessible cardinal
 if there are no supercompact cardinals below $\alpha$.
 We explicitly note that since $\alpha$ is not a supercompact
 limit of supercompact cardinals,
 the first non-trivial stage in the realisation of $\dotq_\alpha$
 can be assumed not to occur until after stage $\sigma$.
  \item If $\alpha$ is a non-measurable inaccessible limit of supercompact cardinals, 
  $\dotq_\alpha$ is a name for $\bigoplus_{\beta<f(\alpha)}\Add(\alpha,\beta)$.
  \item If $\alpha$ is a measurable limit of supercompact cardinals, 
  $\dotq_\alpha$ is a name for $\Add(\alpha, F(\alpha))$.
  \item In all other cases, $\dotq_\alpha$ is a name for the trivial forcing notion.
\end{enumerate}

Let $G\subseteq \p$ be $V$-generic. 
Fix $\kappa$ such that $\kappa$ is a measurable limit
of supercompact cardinals in $V$. Write $G$ as
$G_\kappa \ast g \ast G_{tail}$, where $G_\kappa$ is $V$-generic for
the forcing defined through stage $\kappa$, $g$ is $V[G_\kappa]$-generic for
$\Add(\kappa, F(\kappa))$ (the stage $\kappa$ forcing), and
$G_{tail}$ is $V[G_\kappa][g]$-generic for the rest of $\p$.
The proof of Theorem \ref{thm:least-measurable-limit} shows that
in $V[G_\kappa][g]$, $\kappa$ remains a measurable limit of supercompact cardinals, and $2^\kappa = F(\kappa)$.
By the definition of $\p$, because $F(\kappa) < f(\kappa)$ and the first non-trivial stage of
forcing after stage $\kappa$ does not occur until after $f(\kappa)$,
in $V[G_\kappa][g][G_{tail}] = V[G]$, $\kappa$ remains a measurable limit of supercompact cardinals, 
and $2^\kappa = F(\kappa)$.
By our remarks in the paragraph immediately following Theorem \ref{tgf},
any cardinal in $V[G]$ which is a measurable limit of supercompact
cardinals must have been a measurable limit of supercompact cardinals in $V$.
Since standard arguments show that if $\p$ is a proper class, $V[G]$ is a model of ZFC,
this completes the proof of Theorem \ref{thm:global}.
\end{proof}

As was the case with Theorem \ref{thm:least-measurable-limit}, the proof of Theorem \ref{thm:global}
yields that any $\delta < \Omega$ which in $V$ is the least supercompact cardinal greater than an inaccessible
limit of supercompact cardinals has its supercompactness destroyed after forcing with $\p$.
It is possible, however, by making some slight changes in the definition of $\p$,
to prove an analogue of Theorem \ref{thm:global} in which
both the measurable limits of supercompact cardinals and the
supercompact cardinals which are not limits of supercompact cardinals are the same as in $V$.
In particular, suppose we assume: 
\begin{itemize}

\item $A$, $\Omega$, and $\langle \kappa_\alpha \mid \alpha < \Omega \rangle$
have been defined as in the proof of Theorem \ref{thm:global}.

\item $f : \Omega \to \Omega$ is redefined as $f(\alpha)$ is
the least tall cardinal greater than $\alpha$.

\item 
$\sigma$ is redefined as
the least tall cardinal greater than the supremum of the supercompact cardinals below $\alpha$,
or the least tall cardinal if there are no supercompact cardinals below $\alpha$.

\item We define a partial ordering $\p$ as in the proof of Theorem \ref{thm:global}, except that
in Case (1) of the definition of $\p$, 
$\alpha$ can be {\em any} supercompact cardinal which is not a limit of supercompact cardinals.

\end{itemize}
We now have the following.

\begin{thm}\label{thm:global-ii}
Suppose $V$ is a model of GCH containing more than one measurable
limit of supercompact cardinals.
Let $F$ be an Easton function defined on measurable limits of supercompact cardinals such that $F(\kappa)<f(\kappa)$ 
for any $\kappa\in \dom(F)$. 
Then there is a forcing extension in which 
the measurable limits of supercompact cardinals 
and the supercompact cardinals which are not limits of supercompact cardinals
are the same as in $V$,
and $F$ is realised. 
\end{thm}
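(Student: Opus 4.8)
The plan is to prove Theorem~\ref{thm:global-ii} by combining the iteration structure from Theorem~\ref{thm:global} with the tallness-based techniques developed in the proof of Theorem~\ref{thm:least-measurable-limit-ii}, thereby upgrading the local preservation of supercompact cardinals (that are not limits of supercompact cardinals) to the global setting. First I would set up the redefined partial ordering $\p=\langle \p_\alpha,\dotq_\beta\mid \beta<\alpha<\Omega\rangle$ exactly as specified in the four itemized changes preceding the statement: $f(\alpha)$ is now the least tall cardinal above $\alpha$, $\sigma$ is the least tall cardinal above the supremum of the supercompacts below $\alpha$ (or the least tall cardinal, with the convention $\gamma=\omega$ otherwise), Case~(1) applies to \emph{every} supercompact $\alpha$ that is not a limit of supercompact cardinals, and the remaining cases are as in Theorem~\ref{thm:global}. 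As in Theorem~\ref{thm:least-measurable-limit-ii}, I would invoke \cite[Theorem 2.11]{hamkins-tall} to note that each measurable limit of supercompact cardinals $\kappa$ is tall, so $f$ is well-defined below each such $\kappa$, and I would cite \cite[Lemma 2.1]{apter-cummings} to guarantee $\sigma\in(\gamma,\alpha)$, ensuring the Laver preparation of each supercompact $\alpha$ in Case~(1) has its first non-trivial stage after $\sigma$.

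Next I would fix an arbitrary measurable limit of supercompact cardinals $\kappa$ and factor $G$ as $G_\kappa\ast g\ast G_{tail}$, where $g\subseteq\Add(\kappa,F(\kappa))$ is the stage-$\kappa$ forcing. The local analysis proceeds as in Theorem~\ref{thm:least-measurable-limit-ii}: using the Menas property for tallness of $f$, I would fix an $F(\kappa)$-tallness embedding $j:V\to M$ with $\crit(j)=\kappa$, $j(\kappa)>F(\kappa)$, ${}^\kappa M\subseteq M$, coming from a $(\kappa,F(\kappa))$-extender, and with no $F(\kappa)$-tall cardinal in $[\kappa,F(\kappa)]$ in $M$. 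Since $F(\kappa)<f(\kappa)$ and the first non-trivial stage after $\kappa$ does not occur until after $f(\kappa)$ (the least tall cardinal above $\kappa$), the argument of \cite[Theorem 3.13]{hamkins-tall} lifts $j$ and shows that in $V[G_\kappa][g]$, $\kappa$ remains a measurable limit of supercompact cardinals with $2^\kappa=F(\kappa)$. Because the tail forcing $G_{tail}$ is sufficiently closed and its first non-trivial stage occurs past $f(\kappa)$, this preservation survives to $V[G]$, so $F$ is realised at each $\kappa\in A$.

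To pin down the large-cardinal structure globally, I would observe that $\p$ admits a closure point at $\omega$ (since $\p_0=\Add(\omega,1)$ is non-trivial and everything afterwards is sufficiently closed), so Theorem~\ref{tgf} applies. By the remarks following Theorem~\ref{tgf}, any cardinal that is a measurable limit of supercompact cardinals in $V[G]$ must already have been one in $V$, giving the reverse inclusion for the class $A$; combined with the preservation argument above this yields that the measurable limits of supercompact cardinals are exactly the same as in $V$. For the supercompact cardinals that are not limits of supercompact cardinals, the key point is that Case~(1) now applies the Laver preparation to \emph{every} such $\alpha<\Omega$, rendering each of them indestructible under $\alpha$-directed closed forcing; since all subsequent stages of $\p$ are $\alpha$-directed closed, each such $\alpha$ survives to $V[G]$, and Theorem~\ref{tgf} again forbids the creation of new ones. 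This indestructibility step is where I expect the main subtlety to lie: one must check that the redefined $\sigma$ (via \cite[Lemma 2.1]{apter-cummings}) genuinely separates consecutive supercompacts so that distinct Laver preparations do not interfere, and that the tall-cardinal bookkeeping in the embedding $j$ remains consistent when there are densely many measurable limits of supercompact cardinals below $\Omega$ rather than a single top cardinal. Finally, if $\p$ is a proper class, standard arguments show $V[G]\models\mathrm{ZFC}$, completing the proof.
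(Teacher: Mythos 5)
Your proposal is correct and follows essentially the same route as the paper, which proves Theorem~\ref{thm:global-ii} by running the iteration of Theorem~\ref{thm:global} with the tallness-based modifications of Theorem~\ref{thm:least-measurable-limit-ii} (Menas property for tallness, the redefined $\sigma$ via \cite[Lemma 2.1]{apter-cummings}, Laver preparation at every supercompact that is not a limit of supercompacts) and then invoking Theorem~\ref{tgf} to rule out new instances. The only detail you omit is the paper's closing remark that when $A$ is a set, the L\'evy--Solovay argument handles the supercompact cardinals above $\Omega$; otherwise the two arguments coincide.
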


The proof of Theorem \ref{thm:global-ii} is essentially the same as the proof of Theorem \ref{thm:global},
with all references to the proof of Theorem \ref{thm:least-measurable-limit}
replaced by references to the proof of Theorem \ref{thm:least-measurable-limit-ii}.
We note only that if $\delta < \Omega$ is in $V$ a supercompact cardinal which
is not a limit of supercompact cardinals, the definition of $\p$ 
(specifically, the change made in Case (1)) 
shows that $\delta$ is preserved to the generic extension $V[G]$ by $\p$.
By our remarks in the paragraph immediately following Theorem \ref{tgf},
it now immediately follows that the supercompact cardinals which
are not limits of supercompact cardinals are the same in both $V$ and $V[G]$.
If 
$A$ is a set instead
of a proper class (so that in particular, $\Omega$ is an ordinal), then
by the L\'evy-Solovay results \cite{levysolovay},
the supercompact cardinals above $\Omega$ in both $V$ and $V[G]$
are precisely the same.


The techniques used in the proofs of Theorems \ref{thm:global} and \ref{thm:global-ii} do not
seem to allow for the preservation of supercompact limits of supercompact cardinals.
Although we do not yet know a way of 
accomplishing this in general, it is possible
to achieve this goal in a certain restricted situation.
More specifically, we have the following.


\begin{thm}\label{thm:spct-limit}
Suppose $V$ is a model of GCH in which $\kappa$ is the only supercompact limit of supercompact cardinals 
and there is no inaccessible cardinal greater than $\kappa$. 
Then there is a forcing extension in which 
the supercompact cardinals and measurable limits of supercompact cardinals are the same as in $V$ 
(so in particular, $\kappa$ remains the only supercompact limit of supercompact cardinals), 
and $2^\delta=\delta^{++}$ for every 
$\delta$ which is either supercompact or a measurable limit of supercompact cardinals.
\end{thm}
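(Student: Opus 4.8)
The plan is to adapt the iteration from the proof of Theorem~\ref{thm:global}, with additional care taken for the unique supercompact limit of supercompacts $\kappa$, whose \emph{full} supercompactness (and not merely its measurability) must survive. Let $f(\alpha)$ be the least supercompact cardinal above $\alpha$, and define an Easton support iteration $\p=\langle\p_\alpha,\dotq_\beta\mid\beta<\alpha<\kappa\rangle$ of length $\kappa$, beginning with $\p_0=\Add(\omega,1)$ so as to install a closure point at $\omega$. At a supercompact $\alpha<\kappa$ (which, since $\kappa$ is the only supercompact limit of supercompacts, is never itself a limit of supercompacts) let $\dotq_\alpha$ be the Laver preparation of $\alpha$ through $\sigma$-directed closed forcing, followed by $\Add(\alpha,\alpha^{++})$, where $\sigma$ is the least inaccessible above the supremum of the supercompacts below $\alpha$. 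At a measurable limit of supercompacts $\gamma<\kappa$ let $\dotq_\gamma$ be $\Add(\gamma,\gamma^{++})$ directly; at a non-measurable inaccessible limit of supercompacts $\gamma<\kappa$ let $\dotq_\gamma$ be the lottery sum $\bigoplus_{\beta<f(\gamma)}\Add(\gamma,\beta)$ (present so that the stage appears as a lottery in the relevant ultrapowers); and otherwise let $\dotq_\alpha$ be trivial. After forcing with $\p$ to obtain $V[G]$, I would then force with the external $\Add(\kappa,\kappa^{++})$ to realise $2^\kappa=\kappa^{++}$.

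Below $\kappa$ the preservation arguments are those of Theorems~\ref{thm:least-measurable-limit} and~\ref{thm:global}. Each supercompact $\alpha<\kappa$ is made indestructible by its Laver preparation, and since every subsequent stage of $\p$ together with $\Add(\kappa,\kappa^{++})$ is $\alpha$-directed closed, $\alpha$ remains supercompact with $2^\alpha=\alpha^{++}$. For a measurable limit of supercompacts $\gamma<\kappa$, Corollary~\ref{cor:menas-property} gives that $f$ has the Menas property at $\gamma$, and since $\gamma^{++}<f(\gamma)$ the arguments of Theorems~\ref{thm:sc-continuum} and~\ref{thm:global} apply: one lifts an embedding $j_\gamma:V\to M_\gamma$ with $\crit(j_\gamma)=\gamma$, and because $\gamma$ is not measurable in the ultrapower $M_\gamma$, the stage-$\gamma$ forcing of $j_\gamma(\p)$ is the lottery sum, for which we opt for the $\Add(\gamma,\gamma^{++})$ component generated by $G$. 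As the tail above $\gamma$ is $\gamma^+$-closed, $\gamma$ remains a measurable limit of supercompacts with $2^\gamma=\gamma^{++}$ in $V[G]$ and throughout the remaining forcing.

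The main obstacle is preserving the full supercompactness of $\kappa$, and here I would exploit the hypothesis that there is no inaccessible cardinal above $\kappa$. Fix $\lambda\geq\kappa^{++}$ and a supercompactness embedding $j:V\to M$ with $\crit(j)=\kappa$ and ${}^\lambda M\subseteq M$, chosen so that $\kappa$ is not supercompact in $M$. Since $V\models\GCH$ and $V$ has no inaccessible in $(\kappa,\lambda]$, and ${}^\lambda M\subseteq M$, the model $M$ likewise has no inaccessible in $(\kappa,\lambda]$; hence the stage-$\kappa$ forcing in $j(\p)$ is either trivial or a lottery sum in which we may opt trivially, and the tail of $j(\p)$ above $\kappa$ is $\lambda^+$-closed in $M[G]$. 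This lets one build an $M[G]$-generic for the tail by diagonalisation and lift $j$ to $j:V[G]\to M[j(G)]$ with the $\lambda$-closure of the target preserved, so $\kappa$ remains supercompact in $V[G]$. Finally, because $\kappa^{++}\leq\lambda$, the usual Silver--Menas master condition $\bigcup j``g$ is a condition of $j(\Add(\kappa,\kappa^{++}))$ lying in $M[j(G)]$, so one lifts $j$ once more through $\Add(\kappa,\kappa^{++})$, preserving the supercompactness of $\kappa$ while realising $2^\kappa=\kappa^{++}$.

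To conclude, I would note that $\p\ast\dot\Add(\kappa,\kappa^{++})$ admits a closure point at $\omega$, so by Theorem~\ref{tgf} no new supercompact cardinals and no new measurable limits of supercompacts are created. Together with the preservation results above, the supercompact cardinals and the measurable limits of supercompacts coincide with those of $V$; in particular no small supercompact becomes a limit of supercompacts, so $\kappa$ stays the unique supercompact limit of supercompacts, and $2^\delta=\delta^{++}$ holds at every $\delta$ in either class. I expect the delicate step to be confirming that the minimal embedding $j$ genuinely avoids the Laver-preparation case at stage $\kappa$ in $M$ and that the induced stage-$\kappa$ forcing is harmless; the assumption of no inaccessible above $\kappa$ is precisely what collapses this to the highly closed tail and makes the supercompactness lift go through.
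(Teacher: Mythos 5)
Your overall architecture matches the paper's: an Easton iteration with Laver preparations at the supercompacts below $\kappa$, Cohen forcing $\Add(\delta,\delta^{++})$ at the supercompacts and measurable limits of supercompacts, lottery sums at the non-measurable inaccessible limits, and the hypothesis that there is no inaccessible above $\kappa$ used to make the tail of $j(\p)$ highly closed for the supercompactness lift at $\kappa$. But there is a genuine gap in your treatment of the measurable limits of supercompact cardinals $\gamma<\kappa$. You take $f(\alpha)$ to be the least \emph{supercompact} above $\alpha$ and $\sigma$ to be the least \emph{inaccessible} above the supremum of the supercompacts below $\alpha$, while Laver-preparing \emph{every} supercompact below $\kappa$. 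This is exactly the combination the paper is at pains to avoid: in Theorems \ref{thm:least-measurable-limit} and \ref{thm:global}, the least supercompact above an inaccessible limit of supercompacts is deliberately \emph{excluded} from the Laver preparation precisely so that ``the first non-trivial stage of forcing after stage $\gamma$ does not occur until after $f(\gamma)$''. In your iteration, when you lift an embedding $j:V\to M$ with $\crit(j)=\gamma$ (whether a fine-ultrafilter embedding with the Menas property or a normal-measure embedding), the first non-trivial stage of $j(\p)$ above $\gamma$ is the Laver preparation of $M$'s least supercompact above $\gamma$, whose forcings are only $\sigma$-directed closed for $\sigma$ equal to $M$'s least inaccessible above $\gamma$. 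That $\sigma$ is computed from a $V$-inaccessible just above $\gamma$ and is far below $|[id]|^M$ (indeed, for a normal ultrapower under GCH it is below $\gamma^{++}$), so the tail is nowhere near $(|[id]|^+)^M$-closed and the lifting argument of Theorem \ref{thm:sc-continuum} that you invoke does not go through; you cannot pull the derived measure back into $V[G]$. The paper resolves this tension by redefining both $f$ and $\sigma$ in terms of \emph{tall} cardinals (as in Theorems \ref{thm:least-measurable-limit-ii} and \ref{thm:global-ii}), so that the lottery bound at stage $\gamma$, the starting point of the next Laver preparation, and the Menas property for tallness all coincide at the least tall cardinal above $\gamma$, and the lift is carried out via Hamkins' tall-cardinal argument. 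Without some such device you face a dilemma: either exclude those supercompacts from the preparation (destroying them, which the theorem forbids) or prepare them and lose the closure needed for the measurable limits.

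Two smaller points. First, your claim that the stage-$\kappa$ forcing of $j(\p)$ for the supercompactness embedding is ``either trivial or a lottery sum in which we may opt trivially'' is incorrect: $\kappa$ is a measurable limit of supercompacts in $M$, so by elementarity that stage is $\Add(\kappa,\kappa^{++})$; the paper handles this by making $\Add(\kappa,\kappa^{++})$ the stage-$\kappa$ forcing of $\p$ itself (an iteration of length $\kappa+1$) and using the actual generic $g$ there, after which the remaining tail $\dot\p_{tail}\ast\dot\Add(j(\kappa),j(\kappa^{++}))$ is highly closed by the no-inaccessibles hypothesis. Second, to verify $\lambda$-supercompactness in the extension you should start from a $2^{\lambda^{<\kappa}}$-supercompactness embedding rather than a $\lambda$-supercompactness embedding, as the paper does, so that the relevant power set of $\powerset_\kappa\lambda$ is captured when the generic for the tail is built.
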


\begin{proof}
Let $\langle \kappa_\alpha \mid \alpha \le \kappa \rangle$ enumerate in increasing
order the measurable limits of supercompact cardinals.
Let $f : \kappa \to \kappa$ be the function where $f(\alpha)$ is the least
tall cardinal greater than $\alpha$.
We define
 $\p=\langle \p_\alpha,\dotq_\beta\mid \beta<\alpha \le \kappa\rangle$, an Easton support iteration
 of length $\kappa + 1$.
 We start by letting $\p_0 = \Add(\omega, 1)$. 
 For $0 \le \alpha \le \kappa$, $\dotq_\alpha$ is then defined as follows:
\begin{enumerate}
  \item If $\alpha < \kappa$ is supercompact, 
  $\dotq_\alpha$ is a name for the Laver preparation 
 \cite{laver-preparation} 
 of $\alpha$, defined using only $\sigma$-directed closed partial orderings.
 Here, $\sigma<\alpha$ is the least tall cardinal greater than the supremum of 
 the supercompact cardinals below $\alpha$,
 or the least tall cardinal if there are no supercompact cardinals below $\alpha$.
 We explicitly note that as in the proof of
 Theorem \ref{thm:least-measurable-limit-ii}, since $\alpha$ is not a supercompact
 limit of supercompact cardinals,
 the first non-trivial stage in the realisation of $\dotq_\alpha$
 can be assumed not to occur until after stage $\sigma$.
  \item If $\alpha$ is a non-measurable inaccessible limit of supercompact cardinals, 
  $\dotq_\alpha$ is a name for $\bigoplus_{\beta<f(\alpha)}\Add(\alpha,\beta)$.
  \item If $\alpha \le \kappa$ is either supercompact or a measurable limit of supercompact cardinals, 
  $\dotq_\alpha$ is a name for $\Add(\alpha, \alpha^{++})$.
  \item In all other cases, $\dotq_\alpha$ is a name for the trivial forcing notion.
\end{enumerate}

Let $G \subseteq \p$ be $V$-generic. Write
$G$ as $G_\kappa \ast g$, where $G_\kappa$ is $V$-generic
for the forcing defined through stage $\kappa$, and $g$ is
$V[G_\kappa]$-generic for $\Add(\kappa, \kappa^{++})$ (the stage $\kappa$ forcing).
The arguments found in the proofs of 
Theorems \ref{thm:global} and \ref{thm:global-ii}, 
in tandem with the definition of $\p$, show that in $V[G_\kappa][g]$,
the supercompact cardinals less than $\kappa$
and measurable limits of supercompact cardinals are the same as in $V$, and
$2^\delta = \delta^{++}$ for every $\delta$ which is either supercompact or a measurable
limit of supercompact cardinals. In addition, since there are no inaccessible cardinals
greater than $\kappa$ in $V$, there are no inaccessible cardinals greater than $\kappa$
in $V[G_\kappa][g]$ as well.
Thus, the proof of Theorem \ref{thm:spct-limit} will be complete once
we have shown that $\kappa$ remains supercompact in $V[G_\kappa][g]$.

To do this,
fix an arbitrary
$\lambda \ge \kappa^{++}$, 
and let $j:V\to M$ be a 
$\gamma = 2^{\lambda^{< \kappa}}$-supercompactness embedding 
with $\crit(j)=\kappa$. 
In $V$, there is no inaccessible cardinal greater than $\kappa$, 
and since ${}^\gamma M\subseteq M$, 
in $M$, there is no inaccessible cardinal in 
$(\kappa,\gamma]$. 
Thus, we can write $j(\p)$ as $\p\ast \dot \Add(\kappa,\kappa^{++})\ast \dot\p_{tail}
\ast \dot \Add(j(\kappa), j(\kappa^{++}))$,
where $\dot\p_{tail}$ is a name for a $\gamma^+$-directed closed forcing
whose first non-trivial stage occurs after $\gamma^+$.
Standard arguments (see, e.g., \cite[proof of the Theorem, pages 387--388]{laver-preparation})
now show that $\kappa$ is $\lambda$-supercompact in $V[G_\kappa][g]$.
Since $\lambda$ was chosen arbitrarily, $\kappa$ is supercompact in $V[G_\kappa][g]$.
This completes the proof of Theorem \ref{thm:spct-limit}.
\end{proof}

\section{Questions}\label{sec:ques}


The following questions remain open concerning strongly compact
cardinals and the continuum function.

\begin{qst}
If $\kappa$ is a strongly compact cardinal, can we force GCH at $\kappa$ while preserving the strong compactness of $\kappa$ without assuming any stronger hypotheses?
\end{qst}

\begin{qst}[Woodin]
If GCH holds below a strongly compact cardinal, does it hold above it too?
\end{qst}

Also, 
our methods leave unresolved the problem of realising
an arbitrary Easton function in the presence of a strongly compact cardinal.

\begin{qst}
Suppose $F$ is {\em any}
Easton function and $\kappa$ is a strongly compact cardinal. 
Under what conditions can we realise $F$ while preserving the strong compactness of $\kappa$?
\end{qst}

One of the challenges in the proofs of the theorems of Section \ref{sec:global} 
that remains unresolved is the 
preservation of {\em arbitrary} supercompact limits of supercompact cardinals.

\begin{qst}
Can we prove analogues of Theorems \ref{thm:global}, \ref{thm:global-ii}, and \ref{thm:spct-limit} 
where {\em all} supercompact limits of supercompact cardinals are preserved?
\end{qst}

\bibliographystyle{abbrvurl}
\bibliography{math_refs}

\end{document}